\begin{document}
%-------------------------
%%%%% To ease editing, for IMPAN journals add:

%\baselineskip=17pt

%-------------------------

\newtheorem{THM}{{\!}} [section]
\newtheorem{THMX}{{\!}}
\renewcommand{\theTHMX}{}
\newtheorem{thm}{Theorem} [section]
\newtheorem{cor}[thm]{Corollary}
\newtheorem{lem}[thm]{Lemma}
\newtheorem{clm}[thm]{Claim}
\newtheorem{prop}[thm]{Proposition}
\newtheorem{question}[thm]{Question}
\newtheorem{fact}[thm]{Fact}

\theoremstyle{definition}
\newtheorem{defi}[thm]{Definition}
\newtheorem{axiom}[thm]{Axiom}
\newtheorem{rem}[thm]{Remark}
\newtheorem{souv}[thm]{Reminder}
\newtheorem{ex}[thm]{Example}
\newtheorem{exo}[thm]{Exercice}
\newtheorem{dem}[thm]{Proof}

\newcommand{\Z}{\mathbb{Z}}
\newcommand{\N}{\mathbb{N}}
\newcommand{\Q}{\mathbb{Q}}
\newcommand{\R}{\mathbb{R}}
\newcommand{\C}{\mathbb{C}}
\newcommand{\pgcd}{\hbox{{\rm pgcd}}}
\newcommand{\CM}{\mathcal M}
\newcommand{\sub}{\subseteq}
\newcommand{\CN}{\mathcal N}
\newcommand{\ra}{\rangle}
\newcommand{\la}{\langle}
\newcommand{\CF}{\mathcal F}
\newcommand{\Aut}{\mathrm{Aut}}
\newcommand{\dom}{\mbox{dom}}
\newcommand{\noi}{\noindent}
\newcommand{\acl}{\mathrm{acl\,}}
\newcommand{\acle}{\mathrm{acl\,(\emptyset)}}
\newcommand{\dcl}{\mathrm{dcl\,}}
\newcommand{\rk}{\mathrm{rk}}
\newcommand{\df}{\mathrm{Def}}
\def\CL{\mathcal{L}}
\def\CX{\mathcal{X}}
\def\CY{\mathcal{Y}}

\title[Curves in geometric structures]{Some (non-)elimination results\\ for curves in geometric structures}

\author{Serge Randriambololona}
\address{S. Randriambololona, Department of Mathematics\\  The University of Western Ontario\\ London, Ontario N6A 5B7 Canada}
\email{srandria@uwo.ca}

\author{Sergei Starchenko}
\address{S. Starchenko, Department of Mathematics\\ University of Notre Dame\\ Notre Dame IN 46556  USA}
\email{sstarche@nd.edu}

\keywords{}
\subjclass[2010]{03C10, 03C60, 14H50}
\thanks{The second author was partially supported by NSF grant DMS-0701364}

\begin{abstract}
  We show that the first order structure whose underlying universe is
  $\C$ and whose basic relations are all algebraic subset of $\C^2$
  does not have quantifier elimination. Since an algebraic subset of
  $\C ^2$ needs either to be of dimension $\leq 1$ or to have a
  complement of dimension $\leq 1$, one can restate the former result
  as a failure of quantifier elimination for planar complex algebraic
  curves.  We then prove that removing the planarity hypothesis
  suffices to recover quantifier elimination: the structure with the universe
   $\C$ and a predicate for each algebraic subset of
  $\C^n$ of dimension $\leq 1$ has quantifier elimination.

\end{abstract}

\maketitle

\section{Introduction}
\label{intro}

The theory of structure generated by binary relations definable in an o-minimal structure was studied in \cite{MeRuSt}.
In particular, Theorem 3.2.\cite{MeRuSt}  implies the following proposition:

\begin{prop}
\label{omin}
Let $\CM $ be an o-minimal structure with universe $M$, and let 
$\mathcal B(\CM)$ be the first-order structure whose underlying set is $M$ and whose basic relations are all subsets of $M^2$ which are 
$\emptyset$-definable in $\CM$.
The theory of $\mathcal B(\CM)$ has quantifier elimination. 
\end{prop}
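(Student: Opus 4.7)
The plan is to verify the standard quantifier-elimination criterion: every formula of the form $\exists y \, \varphi(x_1, \dots, x_n, y)$, with $\varphi$ quantifier-free in the language of $\mathcal{B}(\CM)$, is equivalent to a quantifier-free formula. The atomic subformulas of $\varphi$ that involve $y$ are of the form $R(x_i, y)$, $R(y, x_j)$, or $R(y, y)$ for $\emptyset$-definable binary relations $R$ on $M$; the last one already defines a $\emptyset$-definable unary predicate on $y$, hence (by o-minimality) a finite union of points and open intervals with endpoints in $\dcl(\emptyset) \cup \{\pm\infty\}$.

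The key tool is uniform cell decomposition: for each binary $\emptyset$-definable $R$ I would produce finitely many $\emptyset$-definable partial unary functions $f^R_1, \dots, f^R_{k_R}$ and a fixed finite set of cell indices such that, for every $c \in M$, the fiber $R(c, M)$ is the union of those cells among $(-\infty, f^R_1(c)), \{f^R_1(c)\}, (f^R_1(c), f^R_2(c)), \dots, (f^R_{k_R}(c), +\infty)$ whose indices lie in that set. Substituting $c = a_i$ and doing the same for the transposed relations $R^{-1}$, one finds that $\varphi(\bar{a}, M)$ is a boolean combination of such unions. Taking the common refinement over the finitely many relations appearing in $\varphi$, this fiber is itself a finite union of intervals and points bounded by values of the form $f^R_j(a_i)$ (together with $\pm\infty$ and finitely many $\emptyset$-definable constants coming from the $R(y,y)$-type atoms).

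Consequently, the non-emptiness of $\varphi(\bar{a}, M)$ depends only on two kinds of data: (i) the linear order in which the finitely many values $f^R_j(a_i)$ sit along $M$, and (ii) the truth values of the ``sign'' conditions $R(a_i, f^{R'}_{j'}(a_{i'}))$ and their transposes, which tell us which cells of the common refinement belong to $\varphi(\bar{a}, M)$. Crucially, each piece of data is an $\emptyset$-definable \emph{binary} relation on a pair of coordinates of $\bar{a}$: the order $f^R_j(x_i) < f^{R'}_{j'}(x_{i'})$ is a binary relation in $(x_i, x_{i'})$, and so is $R(x_i, f^{R'}_{j'}(x_{i'}))$. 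Each is therefore an atomic formula of $\mathcal{B}(\CM)$, and a suitable finite boolean combination of them expresses $\exists y \, \varphi(\bar{x}, y)$.

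The main technical obstacle I expect is the uniformity in the cell-decomposition step: one has to arrange the boundary functions $f^R_j$ and the selection of ``positive'' cell indices to be independent of the parameter $c$, and then see that the combinatorics of the common refinement can be read off from binary data alone. This is a non-trivial but standard application of cell decomposition in o-minimal theories, and is essentially the content of Theorem~3.2 of~\cite{MeRuSt}.
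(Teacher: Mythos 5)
The paper does not actually prove Proposition \ref{omin}: it is quoted as a consequence of Theorem 3.2 of \cite{MeRuSt}, so there is no in-paper argument to compare against. Your sketch reconstructs what is essentially the standard cell-decomposition proof of that theorem, and it is correct in outline: reduce to a primitive existential formula, decompose each fiber $R(a_i,M)$ and $R(M,a_j)$ uniformly into cells bounded by values $f^R_j(a_i)$ of $\emptyset$-definable partial unary functions, take the common refinement, and observe that non-emptiness of $\varphi(\bar a,M)$ is determined by finitely many conditions each of which is a $\emptyset$-definable relation in at most two of the coordinates of $\bar a$, hence atomic in $\mathcal B(\CM)$.

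Two points deserve care, and you have correctly identified the first as the crux. A single ``fixed finite set of cell indices'' valid for every $c$ does not exist in general: you must first partition the parameter line into finitely many $\emptyset$-definable pieces on which the combinatorial type of the fiber $R(c,M)$ (number of boundary points and the selection of occupied cells) is constant --- this is available by uniform finiteness in o-minimal theories --- and then note that membership of $x_i$ in such a piece is a unary $\emptyset$-definable condition, expressible in $\mathcal B(\CM)$. Second, if one does not assume the underlying order is dense, ``the open cell between $f^R_j(x_i)$ and $f^{R'}_{j'}(x_{i'})$ is non-empty'' is not the same as $f^R_j(x_i)<f^{R'}_{j'}(x_{i'})$; but the correct condition is still a $\emptyset$-definable subset of $M^2$ in the pair $(x_i,x_{i'})$, so the argument is unaffected. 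With these routine adjustments your proof goes through.
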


As an immediate consequence of quantifier elimination, the structure $\mathcal B(\CM)$ has trivial geometry (Lemma 1.8 in \cite{MeRuSt}).

Also, partially motivated by a restricted version of Zil'bers' Conjecture, various 
reducts  of the field of complex numbers have been investigated (see for example \cite{Lv,MaPi,Rb}), 
and  it is
natural to ask whether a complex analogue of the previous proposition holds: does 
the structure $\mathcal B(\overline \C_{\C})$, obtained by equipping
the universe $\C$ with a predicate for each complex algebraic
constructible subset of $\C ^2$, also eliminates quantifiers?  Note
that the arity two is the only arity where this question occurs: for
arity three and above, we recover the full structure of field on $\C$
and thus get quantifier elimination; as for arity zero and one,
elimination of quantifiers is clear.

Section \ref{sectionexemple} will answer negatively this question: it
provides a counter-example to the elimination of quantifiers for
$\mathcal B(\overline \C_{\C})$. Still, one may ask what sets are
definable in $\mathcal B(\overline \C_{\C})$, and what is its (combinatorial)  geometry. These questions are
 answered in Section \ref{sectionmain}: let $\mathcal
C(\overline \C_{\C})$ denote the first-order structure whose
underlying universe is $\C$ and whose basic relations are all the
subsets of cartesian products of $\C$, definable in the field structure, of
dimension $\leq 1$ (the constructible curves). We first note that
$\mathcal B(\overline \C_{\C})$ is a reduct (in the sense of
definability) of $\mathcal C(\overline \C_{\C})$. We then show that
$\mathcal C(\overline \C_{\C})$ eliminates its quantifiers, and deduce that it has a trivial geometry.
In particular  $\mathcal B(\overline \C_{\C})$ is a proper reduct
(in the sense of definability) of the field of complex
number.  The latter results are proven in the more general setting of {\it
  geometric structures}. In Section \ref{parameters} we discuss the
role of algebraic closure versus definable closure in this quantifier
elimination. In Section \ref{high}, we generalize the
construction of Example \ref{exemple} to higher arity: for any fixed
natural number $n$ we consider the structure $\mathcal C_n(\overline
\C_{\C})$ on $\C$ whose basic relations are the subsets of $\C ^n$,
definable in the field of complex numbers, of dimension $\leq 1$. We show that none
of the $\mathcal C_n(\overline \C_{\C})$ has quantifier elimination
(Example \ref{exemple} showing this fact for $n=2$). Finally in Section \ref{quantifier}, 
we discuss which sets are definable in those structures (allowing quantifications).

\medskip
{\bf Remark.} After the paper had been submitted we discovered that a result similar to our Theorem 3.5 
was also proved recently by M.~C.~Laskowski in \cite{Lask}. 
\medskip

{\bf  Acknowledgment.}
We thank the referee for the careful reading of the paper, providing constructive comments, and help
in improving the content of this paper.

\section{Non-elimination for binary relations}
\label{sectionexemple}

Recall from the Introduction that $\mathcal B(\overline \C_{\C})$
denotes the first-order structure whose universe is $\C$ and whose
basic relations are all subsets of $\C ^2$ which are $\C$-definable in
$\overline \C$, the field of complex numbers.  We show that $\mathcal
B(\overline \C_{\C})$ does not eliminate its quantifier:

\begin{ex}
\label{exemple}
Let $R$ be the binary definable relation
$$R(y,s) \Longleftrightarrow \exists z\bigl( z\neq y \wedge y^4+y=z^4+z \wedge s=y+z\bigr)$$
and consider the ternary relation
\[
T(s_1,s_2,s_3)\Longleftrightarrow \exists y\bigl( R(y,s_1)\wedge R(y,s_2) \wedge R(y,s_3)\bigr).
\]

The subset of $\C^3$ defined by the relation $T$ is definable in $\mathcal B(\overline \C_{\C})$ but is {\bf not quantifier-free definable} in 
$\mathcal B(\overline \C_{\C})$.

\end{ex}

\begin{proof}
  Let $\mathcal M$ be a proper elementary extension of
  $\overline \C $ and $M$ its universe.  We fix (any) $a\in M\setminus \C$, and let $\{ \zeta_1,
  \zeta_2, \zeta_3, \zeta_4 \}$ be the four roots of the polynomial
  $X^4+X+a$ in $M$. Note that $a$ is transcendental over $\C$.

  We first claim that $\mathrm{Aut}(\mathcal M \, / \, \C)$ (the group
  of automorphisms of $\CM$ fixing $\C$) acts
  totally transitively on the set $\{ \zeta_1, \zeta_2, \zeta_3,
  \zeta_4 \}$ (which really has four distinct elements). Since $\CM$
  is an algebraically closed field, every element of
  $\mathrm{Gal}\big( X^4+X+a\, / \,\C (a)\big)$ extends to an
  automorphism of $\mathcal M$, hence it is sufficient to show that
  $\mathrm{Gal}\big( X^4+X+a\, / \,\C (a)\big)$ is the symmetric group 
  $\mathfrak S _4$.

  Galois theory (see for instance Theorem 13.4 in \cite{Mo}) tells us that this happens if and only if both the polynomial $X^4+X+a$ and
  its resolvent $X^3-4aX-1$ are irreducible over $\C (a)$ and the discriminant $256a^3-27$ is not a square in $\C(a)$.

We show that $X^3-4aX-1$ is irreducible over $\C(a)$. Assume
not, then $X^3-4aX-1$ has a root in $\C(a)$. Let $\alpha\in \C(a)$ be
such a root, and let $p(X),q(X)\in \C[X]$ be relatively prime polynomials such
that $\alpha=p(a)/q(a)$.  We have $p^3(a)-4ap(a)q^2(a)-q^3(a)=0$, and since $a$ is transcendent over $\C$, 
the equality $p^3(X)-4Xp(X)q^2(X)-q^3(X)=0$ holds in $\C[X]$. If $\gamma\in
\C$ is a root of $q(X)$ then it follows from above equation that
$p(\gamma)=0$. Since $p,q$ are relatively prime, they have no common
roots, hence $q$ must be a constant polynomial. But then $a$ would be algebraic over $\C$, a
contradiction. 

Using the same arguments it is not hard to see that $X^ 2-256a^3-27$ is irreducible and 
that $X^4+X+a$ has no root in $\C(a)$. 

To show that $X^4+X+a$ is irreducible over $\C (a)$, it remains to prove that it can not be written as a product of two quadratic
polynomials. Assume $X^4+X+a=(X^2+\alpha_1X+\beta_1)(X^2+\alpha_2X+\beta_2)$ with
$\alpha_i,\beta_i\in \C(a)$.  Expanding the right side we obtain the equations
\begin{center}
\begin{tabular}{lclc}
(i)& $\alpha_1+\alpha_2=0$& (ii)& $\beta_2+\alpha_1\alpha_2+\beta_1=0$\\ 
(iii)& $\alpha_1\beta_2+\alpha_2\beta_1=1$& (iv)& $\beta_1\beta_2=a$.
\end{tabular}
\end{center}

Combining (i) with (ii) and (iii), we get 
\[
\beta_2+\beta_1=-\alpha_1^2, \ \beta_2-\beta_1=1/\alpha_1,
\]
and therefore 
\[
4\beta_1\beta_2= (\beta_2+\beta_1)^2-(\beta_2-\beta_1)^2=\alpha_1^4-1/\alpha_1^2.
\] 
By (iv), we have 
\[
4a=\alpha_1^4-1/\alpha_1^2.
\]
If for a contradiction $\alpha_1$ would belong to $\C(a)$ then $t=\alpha_1^2$ would belong to $\C (a)$.
But by the previous equation we have $t^3-4at-1=0$: this would contradict the irreducibility over $\C (a)$ of $X^3-4aX-1$ proved earlier.
 
Thus the groups $\mathrm{Gal}\big( X^4+X+a\, / \,\C (a)\big)$ is $\mathfrak S _4$.

\medskip

Consider the two triplets $(s_1,s_2,s_3)$ and $(s'_1,s'_2,s'_3)$ of elements
of $M$ defined by $s_i=\zeta _i+\zeta_4$ for $i=1\ldots 3$ and
$s'_{\tau(1)}=\zeta_{\tau (2)}+\zeta_{\tau (3)}$ for all $\tau \in
\mathfrak S _3$ (see figure below).

\begin{center}
\begin{tabular}{lcr}
\entrymodifiers={+[o][F]} 
\xymatrix{ \zeta _1 \ar@{-}[rd]|*+[o]{s_1} & \zeta_2 \ar@{-}[d]|*+{s_2} \\ \zeta_3 \ar@{-}[r]|*+{s_3} & \zeta _4} 
& \hspace{2cm} 
& \entrymodifiers={+[o][F]}
\xymatrix{\zeta _1 \ar@{-}[d]|*+[o]{s'_2} \ar@{-}[r]|*+[o]{s'_3} & \zeta_2 \ar@{-}[ld]|*+[o]{s'_1} \\ \zeta _3 & \zeta _4}
\end{tabular}
\end{center}

Any two triplets of distinct $\zeta _m$'s are $\Aut (\mathcal M
/\C)$-conjugate, and we get the elementary equivalence $(s_i, s_j)
\equiv _{\C} (s'_k, s'_l)$ (in the sense of $\overline \C$)  for any $1\leq i \neq j \leq 3$ and any
$1\leq k \neq l \leq 3$.

In particular the elementary equivalence for $(i,j)=(k,l)$ insures that if $T$ were to be quantifier-free definable in $\mathcal B (\overline \C _{\C})$ then we 
would have $ T(s_1,s_2,s_3)$ if and only if $ T(s'_1,s'_2,s'_3)$: $T$ would be equivalent to a boolean combination of formul\ae\ in 
$\overline \C _{\C}$, each of which involving only two of the three possible variables (say indexed by $(i,j)$); such a formula would be satisfied by the 
corresponding subtuple of $(s_i,s_j)$ if and only if were satisfied by the subtuple $(s'_i,s'_j)$.

But $(s_1,s_2,s_3)$ does satisfy $T$ whereas we will show that $(s'_1,s'_2,s'_3)$ does not. 
Suppose for a contradiction that $T(s'_1,s'_2,s'_3)$ holds; then there are  $\{ \zeta ' _1,  \zeta ' _2, \zeta ' _3, \zeta ' _4 \}$ such that 
${\zeta '_i}  ^4 + \zeta' _i ={\zeta '_j} ^4 + \zeta' _j$ and 
$s' _i=\zeta ' _i+\zeta '_4$. Thus 
\[
-2\zeta_4= 2(\zeta  _1+  \zeta  _2+\zeta  _3)=s'_1+s'_2+s' _3=  \zeta ' _1+  \zeta ' _2+\zeta ' _3+3\zeta ' _4=2\zeta ' _4
\] and 
\[
\zeta '_i = s'_i-\zeta'_4=s'_i +\zeta _4=\zeta_1+\zeta_2+\zeta_3-\zeta_i+\zeta_4=-\zeta _i.
\] Therefore 
\[
-a-2\zeta _i= \zeta _i  ^4 - \zeta _i = {\zeta' _i}  ^4 + \zeta' _i= {\zeta' _j}  ^4 + \zeta' _j=\zeta _j ^4 - \zeta _j=-a-2\zeta _j 
\] 
for all $i\neq j$: this contradicts the 
fact that the $\zeta_i$'s are distinct.  
\end{proof}

We get a slightly stronger result than announced: let 
$\mathcal B(\overline \C_{\emptyset})$ denote the first-order structure whose universe is $\C$ and 
whose basic relations are all subsets of $\C ^2$ which are {\bf $\emptyset$-definable} in $\overline \C$. Then 
$\mathcal B(\overline \C_{\emptyset})$ defines subsets of $\C ^3$ which are not quantifier-free definable in 
$\mathcal B(\overline \C_{\C})$.

\section{Elimination for curves}
\label{sectionmain}

We have seen in the previous section that existential quantifiers can be used to bind variables together and define 
{\it essentially} non-binary algebraic relations from binary ones.  Still one can ask how complicated can be a set defined using only binary relations. 
Could it be, for instance, that $\mathcal B (\overline \C _{\C})$ and the full field structure  $\overline \C$ are interdefinable ? 
We will show that it is not the case.

First note that each subset of $\C^2$ definable in $\overline \C$ (with parameters) is a boolean combination of subset of $\C ^2$ definable in 
$\overline \C$ (with parameters) of {\bf dimension smaller or equal to $1$} and vice versa (where ``{\it dimension}'' refers to the $\acl$-dimension in the sense 
of $\overline \C $):  

\begin{fact}
\label{twoiscurve}
Let $X\subseteq \C ^2$ be definable in $\overline \C$, with parameters. Then either $\dim X \leq 1$ or $\dim (\C ^2 \setminus X) \leq 1$.
\end{fact}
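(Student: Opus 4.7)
The plan is to exploit the fact that $\C ^2$, as an algebraic variety, is irreducible of dimension two, combined with quantifier elimination for algebraically closed fields. By the latter, any definable $X \sub \C ^2$ is constructible, i.e.\ a Boolean combination of Zariski closed subsets; in the context of ACF its $\acl$-dimension agrees with the Krull dimension of its Zariski closure $\overline X$.

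First, if $\overline X \neq \C ^2$, then $\overline X$ is a proper closed subset of the irreducible variety $\C ^2$, so $\dim \overline X \leq 1$ and therefore $\dim X \leq 1$. Otherwise $\overline X = \C ^2$, and one needs to show that the complement has dimension at most one. Write $X$ as a finite union of locally closed pieces $X = \bigcup _i (U_i \cap V_i)$ with each $V_i$ closed and each $U_i$ open. Since $\C ^2 = \overline X \sub \bigcup _i V_i$ and $\C ^2$ is irreducible, some $V_i$ must equal $\C ^2$; the corresponding $U_i$ is then a non-empty Zariski open subset of $\C ^2$ contained in $X$. Hence $\C ^2 \setminus X \sub \C ^2 \setminus U_i$ sits inside a proper closed subset of $\C ^2$, which has dimension at most one.

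There is no genuine obstacle beyond the identification of the $\acl$-dimension with the Zariski dimension, which is standard in ACF. One could equally well give a one-line model-theoretic proof: since $\overline \C$ is strongly minimal, $\C ^2$ has Morley rank two and Morley degree one, so any definable $X \sub \C ^2$ of Morley rank two must have $\mathrm{MR}(\C ^2 \setminus X) < 2$, which is exactly the stated dichotomy.
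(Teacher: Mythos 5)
Your proof is correct. The paper states Fact \ref{twoiscurve} without any proof, treating it as a standard property of algebraically closed fields, so there is no argument in the text to compare yours against; both of your routes are valid. The Morley rank version is the most economical: $\C^2$ has Morley rank $2$ and Morley degree $1$, so two disjoint definable subsets cannot both have rank $2$, and in $\overline{\C}$ the $\acl$-dimension of a definable set coincides with its Morley rank. One cosmetic point in your constructible-set argument: when you infer from $\C^2=\overline{X}\subseteq\bigcup_i V_i$ that some $V_i=\C^2$, that particular $V_i$ could a priori be paired with an empty $U_i$. It is cleaner to write $\overline{X}=\bigcup_i\overline{U_i\cap V_i}$ and use irreducibility of $\C^2$ to find $i$ with $\overline{U_i\cap V_i}=\C^2$; this gives at once $V_i=\C^2$ and $U_i\neq\emptyset$, which is exactly what the rest of your argument needs. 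This is a one-line repair, not a genuine gap.
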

 
By Fact \ref{twoiscurve}, we can view Example \ref{exemple} as showing that the theory of $\C$ equipped with a predicate for each planar algebraic curve 
does not have quantifier elimination.
 
However we will show that if we remove the requirement that the curves are \emph{planar}, the quantifier elimination holds. 
As a consequence $\mathcal B (\overline \C _{\C})$ will be shown to have trivial geometry and thus to be a proper reduct of $\overline{\C}$.

The results from this section not only hold in $\overline \C$ but also in the more general setting of {\it geometric structures}. 
O-minimal structures, strongly minimal structures (such as algebraic closed fields), $p$-adic fields or algebraically closed valued fields 
with a predicate for their valuation ring all are geometric structures. 

\begin{defi}
Recall that the structure $\mathcal M$ is said to be a {\it geometric structure} if it satisfies 
\begin{enumerate}
\item the Exchange Principle: $a\in \acl (bC)\setminus \acl (C) \Rightarrow b\in \acl (aC)$
\item Uniform Finiteness Property: given a formula $\psi$, there is an integer $k$ such that for each tuple $\mathbf{a}$ 
the set $\{b |\mathcal M \models\psi (b,\mathbf{a})\}$ is either infinite or of side $\leq k$.
\end{enumerate}
Note that this property is a property of the theory of $\mathcal M$.
\end{defi}

In the sequel we will work in a fixed geometric structure $\CM$ and call dimension the $\acl$-dimension 
for its definable sets: if $\Phi$ is a formula (with parameters $B$) defining such a set $X$ and $\widetilde{\CM}$ is a saturated extension of $\CM$, 
the dimension of $X$ is the maximal $d$ for which there exists $(a_1,\ldots,a_n)$ satisfying $\Phi$ (in $\widetilde{\CM}$) and a subtuple 
$(a_{i_1},\ldots, a_{i_d})$ of $(a_1,\ldots,a_n)$ of length $d$ such that $a_{i_{j+1}}\notin \acl (\{a_{i_1},\ldots,a_{i_j}\}\cup B)$. 
(This quantity is independent of the choice of the formula $\Phi$, the parameters $B$ and the structure $\widetilde{\CM}$.)

\begin{defi}
Let $\CM$ be a geometric structure with universe $M$.

A set $X\subseteq M^n$ definable with parameters from $A\subseteq M$, of dimension $\leq 1$ will be called an ($A$-definable) {\it $n$-curve}. 

An ($A$-definable) {\it curve} is an ($A$-definable) $n$-curve for some $n$.

A set $\tilde C \subseteq M^m$ is said to be an ($A$-definable) {\it
  cylinder based on a $n$-curve} if there are some indices $1\leq i_1
< \ldots < i_n\leq m$ and an $A$-definable $n$-curve $C$ such that
$\tilde C=\{(x_1,\ldots,x_m)\in M^m | (x_{i_1},\ldots,x_{i_n})\in C
\}$.

A set is called an ($A$-definable) {\it curve-based cylinder} if it is
an ($A$-definable) cylinder whose base is a $n$-curve for some $n\in
\N$.
\end{defi}

\begin{rem}
These definitions reflect the fact that one needs to pay attention to the variables used: the formula $x_1=x_2$ viewed as a formula in the variables $x_1$ and 
$x_2$ defines a $2$-curve but if we add a dummy variable $x_3$ it defines a cylinder based on a $2$-curve, of dimension $2$.

Dummy variables and cylinders allow to think about ``boolean combinations of curves" involving different sets of variables, as shown in Example \ref{exemple}.
\end{rem}

As previously announced, the aim of the section is to prove Theorem \ref{main} which easily implies that the structure obtained 
by equipping $\C$ with predicate for each algebraic subset of $\C ^2$ is a proper reduct of the field structure.

\begin{thm}
\label{main}
Let $\CM$ be a geometric structure with universe $M$. 
The structure $\mathcal C (\CM_{\acl(\emptyset)})$ obtained by equipping $M$ with predicates for each $\acl(\emptyset)$-definable curve has quantifier 
elimination. 
\end{thm}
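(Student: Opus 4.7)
The plan is to apply the back-and-forth criterion for quantifier elimination, working in an $\omega$-saturated elementary extension $\CN$ of $\CM$: given tuples $\bar a,\bar b$ in $\CN$ realizing the same quantifier-free $\mathcal C$-type (in the language of $\mathcal C(\CM_{\acl(\emptyset)})$) and any $c\in\CN$, I must find $d\in\CN$ such that $(\bar a,c)$ and $(\bar b,d)$ realize the same qf $\mathcal C$-type. This is equivalent to showing that every existential formula $\exists y\,\psi(y,\bar x)$, with $\psi$ qf, is equivalent to a qf formula.

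The key geometric input is a fibre analysis for $\acl(\emptyset)$-definable curves. For any such curve $C(y,\bar z)\subseteq M^{|\bar z|+1}$ of dimension $\leq 1$, the projection $\pi(C)\subseteq M^{|\bar z|}$ is itself an $\acl(\emptyset)$-curve, and the exceptional locus $E_C:=\{\bar u:\dim F_C(\bar u)\geq 1\}$ is \emph{finite}: were $E_C$ of dimension $\geq 1$, exchange would embed the $2$-dimensional set $E_C\times(\text{positive-dim fibre})$ in $C$, contradicting $\dim C\leq 1$. Consequently the trichotomy ``$F_C(\bar u)$ empty'' / ``finite nonempty'' / ``of dimension $1$'' is qf-$\mathcal C$-definable, and by UF the level sets of the bounded cardinality function $\bar u\mapsto|F_C(\bar u)|$ are subcurves of $\pi(C)$, hence qf-$\mathcal C$-definable.

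With this in hand, the extension splits into two cases. If $c\notin\acl(\bar a\cup\acl(\emptyset))$, pick $d\in\CN$ generic over $\bar b\cup\acl(\emptyset)$ by saturation: for every curve $C(y,\bar z)$, $C(c,\bar a_{\bar z})$ holds iff $\bar a_{\bar z}\in E_C$ (a generic element over $\bar a$ lies in $F_C(\bar a_{\bar z})$ iff that fibre has dimension $1$), which by qf-equivalence applied to the $0$-curve $E_C$ is iff $\bar b_{\bar z}\in E_C$, iff $C(d,\bar b_{\bar z})$; negative atoms are symmetric. If $c\in\acl(\bar a\cup\acl(\emptyset))$, some positive atom $C^*(y,\bar z^*)$ in the qf-type of $c$ over $\bar a$ puts $c$ in a finite fibre $F_{C^*}(\bar a_{\bar z^*})$. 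By $\omega$-saturation it suffices to show finite consistency of the transported type over $\bar b$: for any finite conjunction $p_0(y)=\bigwedge_k C_k(y,\bar b_{\bar z_k})\wedge\bigwedge_\ell\neg D_\ell(y,\bar b_{\bar w_\ell})$, augmenting with $C^*$ restricts realizations to the finite intersection
\[
F_{C^*}(\bar b)\cap\bigcap_k F_{C_k}(\bar b)\setminus\bigcup_\ell F_{D_\ell}(\bar b),
\]
which is non-empty iff the corresponding intersection over $\bar a$ is non-empty, by transferring the cardinality along the qf-equivalence of $\bar a$ and $\bar b$.

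The main obstacle is this last step in the algebraic case: verifying uniformly, across all conjunctions of positive and negative curve atoms, that the cardinality (and in particular the non-emptiness) of such finite fibre intersections is a qf-$\mathcal C$-definable function of the parameters. The plan is to iterate the fibre analysis inside the curve $\pi(C^*)$---each successive atom restricts the current level set by a cylinder over a lower-arity $\acl(\emptyset)$-curve (or its complement), and UF combined with exchange keep the sequence of possible cardinalities uniformly bounded with qf-$\mathcal C$-definable level sets---ultimately expressing the existence of an algebraic witness as a qf-$\mathcal C$-formula in $\bar a$.
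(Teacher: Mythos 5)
Your back-and-forth framing is a legitimate route to quantifier elimination, and your fibre analysis of a single curve (the projection is a curve, infinite fibres occur only over a finite exceptional locus, cardinality level sets are qf-definable) is correct and relevant. But the argument has a genuine gap exactly where you flag ``the main obstacle'', and that obstacle is the whole theorem. The core difficulty is that a conjunction $\bigwedge_i C_i(\mathbf{x}^i,y)$ of curve atoms involves \emph{different} subtuples $\mathbf{x}^i$ of the parameter variables, so the set it defines is an intersection of cylinders over curves, not a fibre of any single curve; your plan to ``iterate the fibre analysis inside $\pi(C^*)$'' never explains why the cardinality (or non-emptiness, or infinitude) of $\bigcap_i F_{C_i}(\bar a)$ is a qf-$\mathcal C$-condition on the full tuple $\bar a$. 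The paper resolves precisely this with one exchange-based statement (Lemma \ref{grouping}): if $(\boldsymbol{\xi},\gamma)$ satisfies $\bigwedge_i C_i(\boldsymbol{\xi}^i,\gamma)$ and $\gamma\notin\acl(\emptyset)$, then every coordinate of $\boldsymbol{\xi}$ occurring in some atom lies in $\acl(\gamma)$, so the \emph{whole} tuple $(\boldsymbol{\xi}',\gamma)$ lies on a single $\acl(\emptyset)$-definable curve; compactness then rewrites the conjunction as (a curve in all involved variables jointly) $\vee$ (finitely many algebraic values of $y$). Once that is available, $\exists^{\geq e}y\,E(\mathbf{x}',y)$ is automatically a curve condition on $\mathbf{x}'$, and negated atoms are handled by a finite/infinite split plus induction on the number of negations. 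Some form of this joint-curve lemma is indispensable, and it is absent from your write-up.

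There are also two smaller but real errors. First, your case split is misaligned: $c\in\acl(\bar a\cup\acl(\emptyset))$ does \emph{not} imply that some curve atom in the qf type of $c$ over $\bar a$ puts $c$ in a finite fibre. In $\overline{\C}$ take $a_1,a_2$ independent transcendentals and $c=a_1+a_2$: every tuple $(c,\bar a_{\bar z})$ with $\bar z\neq\emptyset$ has dimension $2$ over $\acl(\emptyset)$, hence lies on no $\acl(\emptyset)$-curve, and the qf type of $c$ over $\bar a$ is that of a generic element even though $c$ is algebraic over $\bar a$. The correct dichotomy is whether the qf type itself is algebraic (some finite conjunction of its atoms has finitely many solutions), and both branches then need the grouping lemma to transfer (non-)emptiness to $\bar b$. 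Second, in the generic case the parenthetical claim that ``a generic element over $\bar a$ lies in $F_C(\bar a_{\bar z})$ iff that fibre has dimension $1$'' is false in a general geometric structure: in an o-minimal $\CM$ an infinite fibre can be an interval missing your generic point. You must additionally match the unary qf type of $c$ over $\acl(\emptyset)$ when choosing $d$; this is doable by saturation, but it is not what you wrote.
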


\begin{proof}
Without loss of generality, we can assume that $\mathcal M$ is sufficiently saturated.

By syntactic arguments, we only need to consider formul\ae \ of the form 
\begin{equation}
\label{ori}
\exists y \bigwedge_{i=1} ^r C_i(\mathbf{x}^i,y) \wedge  \bigwedge_{j=r+1} ^{r+s}  \neg C_j(\mathbf{x}^j,y)
\end{equation}
\begin{itemize}
\item where each $\mathbf{x}^k$ denotes $l_k$-tuples of variables among $(x_1,\ldots , x_n)$
\item and each $C_k$ denotes a formula in the $(l_k+1)$-subtuple $\mathbf{x}^ky$ of free variables among those of the tuple $(x_1,\ldots , x_n,y)$, 
defining a $(l_k+1)$-curve 
\end{itemize}
and prove that they define a boolean combination of $\acl(\emptyset)$-definable curve-based cylinders.  
\smallskip

In what follows $C(\mathbf{w},y)$, $C_i(\mathbf{w},y)$ and $E(\mathbf{w},y)$ will denote formul\ae \ with distinguished last variable $y$, 
defining $\acl(\emptyset)$-definable $(|\mathbf{w}|+1)$-curves. 
Similarly $\phi_l (\mathbf{x})$ will denote a formula defining a boolean combination of $\acl(\emptyset)$-definable curve-based cylinders. 
(Note that the bases of each of these cylinders may involve different tuples of coordinates among those of $\mathbf x$.)

\begin{lem}
\label{grouping}
Any formula $\chi(\mathbf{x},y)$ of the form
\begin{equation} \label{beforegrouping}
\bigwedge_{i=1} ^r C_i (\mathbf{x}^i,y) 
\end{equation}
is equivalent to a disjunction
\begin{equation} \label{rat} 
E(\mathbf{x}',y) \vee \bigvee _{l=1} ^L \big( y=q_l \wedge \phi_l (\mathbf{x}') \big)
\end{equation}
where $\mathbf{x}'$ is the sub-tuple of $\mathbf x$ of all those variables involved in some of the tuples $\mathbf{x}^i$ ($i=1,\ldots,r$), 
$q_1,\dotsc,q_L$'s are  elements of $\acl (\emptyset)$, and 
$\mathcal M\models E(\mathbf{x}', y) \rightarrow \bigwedge_{l=1} ^L y\neq q_l $.
\end{lem}

\begin{proof}  
Consider $\boldsymbol{\xi}\gamma$ in $M^{|\mathbf{x}|+1}$ such that
 $\chi(\boldsymbol{\xi},\gamma)$ holds and let 
the sub-tuples $\boldsymbol{\xi} ^i$ and $\boldsymbol{\xi}'$ of $\boldsymbol{\xi}$ correspond, respectively, 
to the sub-tuples $\mathbf{x} ^i$ and $\mathbf{x}'$ of $\mathbf{x}$.

By the Exchange Property, either $\gamma$ belong to 
$\acl(\emptyset)$ or each coordinate of $\boldsymbol{\xi} '$ belongs to $\acl (\gamma)$. 
In the latter case, $\boldsymbol{\xi}' \gamma$ satisfies some formula 
defining an $\acl(\emptyset)$-definable curve.

% and each $\boldsymbol{\xi} ^i \gamma$ satisfies the formula $C_i (\mathbf{x}^i,y)$.

Since $\CM$ is saturated enough, we obtain by compactness that for some $\acl(\emptyset)$-definable curve $C(\mathbf{x'},y)$ and 
$q_1,\dotsc,q_L\in \acl(\emptyset)$ such that
\[ \CM\models \chi(\mathbf{x},y)\rightarrow 
\bigl( \bigvee_{i=1}^Ly=q_i \, \vee  C(\mathbf{x'},y)\bigr). \]
We can take 
\[  C(\mathbf{x'},y) \wedge \bigwedge_{i=1} ^r C_i (\mathbf{x}^i,y) 
\wedge  \bigwedge_{i=1} ^L y\not = q_i \mbox{ for } E(\mathbf{x}',y)\]
and 
\[  \bigwedge_{i=1} ^r C_i (\mathbf{x}^i,q_l) \mbox{ for } \phi _l(\mathbf{x}'),\ 
l=1,\dotsc,L.\] 
\end{proof}

\begin{lem}
\label{positive}
Let $d$ be a natural number. Any formula of the form 
\begin{equation} \label{apositive}
\exists ^{\geq d} y \bigwedge_{i=1} ^r C_i (\mathbf{x}^i,y) 
\end{equation}
defines a boolean combination of $\acl (\emptyset)$-definable curve-based cylinder.
\end{lem}

\begin{proof}
By Lemma \ref{grouping}, the formula (\ref{apositive}) is equivalent to some 
\[
\exists ^{\geq d} y \,  E(\mathbf{x}',y) \vee \bigvee _{l=1} ^L \big( y=q_l \wedge \phi_l (\mathbf{x}') \big)
\] with 
$E(\mathbf{x}', y) \rightarrow \bigwedge_{l=1} ^L y\neq q_l$ and the 
$q_l$'s all distinct. It is thus  
also equivalent to the disjunctions of the formul\ae
\[
\big( \exists ^{\geq \, d -|\widetilde L|} \, y \,  E(\mathbf{x}',y)\big) \wedge \big( \bigwedge_{l\in \widetilde L} \phi_l(\mathbf{x}') \wedge 
\bigwedge_{l\notin \widetilde L} \neg \phi_l(\mathbf{x}') \big)
\] 
as $\widetilde L$ ranges among the subsets of $\{1,\ldots ,L \}$.

Since for every $e\in \mathbb N$  the set   
$\{ \mathbf{x}' | \exists^{\geq e} y E(\mathbf{x}',y)\}$ has dimension $\leq 1$,
 any formula of the form $\exists^{\geq e} y E(\mathbf{x}',y)$ defines a
curve.

\end{proof}

We proceed by induction on $s$ (the number of negations involved) to
show that that any formula of the form
\begin{equation*}
 \exists y \bigwedge_{i=1} ^r C_i(\mathbf{x}^i,y) \wedge  \bigwedge_{j=r+1} ^{r+s}  \neg C_j(\mathbf{x}^j,y)
\tag{1}
\end{equation*}
defines a boolean combination of $\acl (\emptyset)$-definable curve-based cylinders.

The result is proved for $s=0$ by Lemma \ref{positive}. Fix $s\geq 1$, a formula of the form (\ref{ori}) and suppose that the induction hypothesis  holds for any 
$s'<s$.

Note first that (\ref{ori}) is equivalent to the formula  
\[
\exists y \bigwedge_{i=1} ^r C_i (\mathbf{x}^i,y) \wedge  \bigwedge_{j=r+1} ^{r+s}  \neg \big(C_j (\mathbf{x}^j,y) \wedge \bigwedge_{i=1} ^r C_i (\mathbf{x}^i,y) \big).
\]
This formula says that there is a $y$ satisfying $\bigwedge_{i=1} ^r C_i (\mathbf{x}^i,y)$ but not satisfying  
$\bigvee_{j=r+1} ^{r+s} \big( C_j (\mathbf{x}^j,y) \wedge  \bigwedge_{i=1} ^r C_i (\mathbf{x}^i,y)\big)$.

Thus (\ref{ori}) is equivalent to the disjunction of (\ref{finite}) and (\ref{infinitedist}) below: 
\begin{equation} \label{finite}
\bigvee _{d\in \N}  \bigg( \Big( \exists^{\geq d+1} y\  \bigwedge_{i=1} ^r C_i (\mathbf{x}^i,y) \Big) \wedge
\Big( \exists^{=d} y\ \bigvee_{j=r+1} ^{r+s} \big( C_j (\mathbf{x}^j,y) \wedge  \bigwedge_{i=1} ^r C_i (\mathbf{x}^i,y)\big) \Big) \bigg) 
\end{equation} 
(the case when there are only finitely many $y$'s satisfying the condition
$\bigvee_{j=r+1} ^{r+s} \big( C_j (\mathbf{x}^j,y) \wedge  \bigwedge_{j=i} ^r C_i (\mathbf{x}^i,y)\big)$) 

\vspace{1mm}
\noi and the formula 
\begin{multline} \label{infinitedist}
\bigvee_{j=r+1} ^{r+s} \bigg( \Big( \exists^{\infty} y\  C_j (\mathbf{x}^j,y) \wedge  \bigwedge_{i=1} ^r C_i (\mathbf{x}^i,y)\Big) \wedge\\ 
\Big( \exists y \bigwedge_{i=1} ^r C_i (\mathbf{x}^i,y) \wedge  \bigwedge_{k=r+1} ^{r+s}  \neg C_k (\mathbf{x}^k,y) \Big) \bigg) 
\end{multline}
(the case where there is some $j>r$ for which there are infinitely many $y$'s satisfying the $C_j (\mathbf{x}^j,y) \wedge  
\bigwedge_{i=1} ^r C_i (\mathbf{x}^i,y)$). 

Since the formula $C_j (\mathbf{x}^j,y)$  defines a subset of $M^{|\mathbf{x}^j|+1}$ of dimension $\leq 1$, we get that 
\[
\exists^{\infty } y\   C_j (\mathbf{x}^j,y) \leftrightarrow \bigvee_{p=1} ^{P_j} \mathbf{x}^j = \mathbf{r}_p^j
\] 
for some finite collection of tuples $\mathbf{r}_p ^j$, of elements of $\acl (\emptyset)$.
Thus (\ref{infinitedist}) is equivalent to a disjunction of formul\ae \ of the form
\begin{equation}\label{lessvariables}
(\mathbf{x}^{j_0} = \mathbf{r}_p ^{j_0} )  \wedge 
\exists y  \, \bigwedge_{i=1} ^r C_i (\mathbf{x}^i,y) \wedge \neg C_j (\mathbf{r}_p ^{j_0},y) 
\wedge \bigwedge_{\substack{j=r+1\\j\neq j_0}} ^{r+s}  \neg C_j (\mathbf{x} ^j,y) 
 .
\end{equation} 

Observe here that the formula $\neg C_{j_0} (\mathbf{r}_p ^{j_0},y)$ in the free variables $\mathbf xy$ is not only the \emph{negation of} a 
formula defining a curve-based cylinder with parameters from $\acl (\emptyset)$: the formula $\neg C_{j_0} (\mathbf{r}_p ^{j_0},y)$ in the free 
variables $\mathbf xy$ 
defines also {\it itself} a curved-based cylinder definable over $\acl (\emptyset)$ (for it only concerns the distinguished variable $y$). 
Therefore the induction hypothesis implies that each formula of the form (\ref{lessvariables}) defines a boolean combination of 
$\acl (\emptyset)$-definable curve-based cylinders.   
\smallskip

It only remains to prove that the formula (\ref{finite}) is equivalent to a boolean combination of curve-based cylinders. 
By the Uniform Finiteness Property we can replace the infinite disjunction in (\ref{finite}) by a finite one.

Applying the Inclusion-Exclusion Formula for finite sets $$|\bigcup _{h=1} ^t X_h|=\sum_{H\subseteq \{1 , \ldots t \}} (-1)^{|H|+1} \cdot |\bigcap _{h\in H} X_h|$$ 
we get that   \[ 
\exists^{=d} y\ \bigvee_{j=r+1} ^{r+s} \big( C_j (\mathbf{x}^j,y) \wedge  \bigwedge_{i=1} ^r C_i (\mathbf{x}^i,y) \big)
\]
is equivalent to a boolean combination of formul\ae \ of the form 
\[
\exists^{=e} y\ \bigwedge_{j\in J} C_j (\mathbf{x}^j,y)  \wedge  \bigwedge_{i=1} ^r C_i (\mathbf{x}^i,y)
\]
 for some $J$'s 
ranging among subsets of $\{r+1,\ldots ,r+s\}$ and some natural number $e$ not larger than $d$.
Using Lemma \ref{positive}, each of these latter formul\ae \ is equivalent to a boolean combination of one-dimensional formul\ae \ with parameters in 
$\acl (\emptyset)$.
\end{proof}

Quantifier elimination for $\mathcal C (\overline \C_{\C})$ (the structure obtained by equipping $\C$ with a predicate for each curve 
$\C$-definable in $\overline \C$) easily implies that:

\begin{cor}
The structure $\mathcal C(\overline \C_{\C})$ has trivial geometry (that is $\acl(A)=\bigcup_{a\in A} \acl(\{a \})$ for all $A\subseteq \C$.
\end{cor}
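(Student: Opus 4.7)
The plan is to read off the corollary from Theorem \ref{main}. Assume $b\in \acl(A)$ with $A\subseteq \C$ nonempty; we seek $a\in A$ with $b\in \acl(\{a\})$. By Theorem \ref{main} there is a quantifier-free $\mathcal C$-formula $\phi(x,\bar a)$, with $\bar a\in A^n$, whose solution set in $\C$ is finite and contains $b$. After writing $\phi$ in disjunctive normal form and keeping a disjunct realized by $b$, we may take $\phi$ to be a conjunction of literals of the form $C_i(x,\bar a^i)$ or $\neg C_j(x,\bar a^j)$, each $C_\bullet$ being a $\C$-definable curve (or curve-based cylinder).

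Since $\overline \C$ is strongly minimal, each such literal, after fixing its parameters, defines either a finite or a cofinite subset of $\C$; the finiteness of the whole conjunction therefore forces at least one literal $\ell$ already to define a finite set containing $b$, and $\ell$ involves only a sub-tuple $\bar a'\subseteq\bar a$. In the positive case $\ell=C(x,\bar a')$, pick an irreducible component $C'$ of the Zariski closure of $C$ through $(b,\bar a')$: if $\dim C'=0$ then $b$ lies in a finite $\acl(\emptyset)$-definable set, so $b\in \acl(\emptyset)$; if $\dim C'=1$, the finite-fiber condition at $\bar a'$ rules out that $C'$ projects to a single point in the $\bar y$-direction, so some coordinate projection $\pi_i\colon C'\to \C$ onto a $y$-axis is dominant. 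By irreducibility a one-dimensional fiber of $\pi_i$ would coincide with $C'$ itself, contradicting dominance, hence all fibers are zero-dimensional; the fiber at $y_i=a'_i$ places $b$ in a finite $\{a'_i\}$-definable subset of $\C$, giving $b\in \acl(\{a'_i\})$ with $a'_i\in A$. In the negated case $\ell=\neg C(x,\bar a')$, the cofiniteness of the slice $C(\C,\bar a')$ forces the whole vertical line $\C\times \{\bar a'\}$ to be an irreducible component of $C$; this component is itself a $\C$-definable curve, hence a predicate of $\mathcal C$ and $\emptyset$-definable, which pins down $\bar a'\in \acl(\emptyset)$, and then $\ell$ defines a finite $\acl(\emptyset)$-definable set, so $b\in \acl(\emptyset)$.

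In every case $b\in \acl(\emptyset)\cup \bigcup_{a\in A}\acl(\{a\})$, which equals $\bigcup_{a\in A}\acl(\{a\})$ since $A$ is nonempty; the reverse inclusion is immediate by monotonicity. The main technical step is the dimension-theoretic case analysis of the witnessing literal: the positive case rests on the fact that a dominant projection from an irreducible one-dimensional variety has only finite fibers, and the negative case requires extracting a vertical irreducible component from a cofinite slice and observing that its parameters become $\emptyset$-definable once the component is named as a predicate of $\mathcal C$.
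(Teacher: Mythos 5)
Your proof is correct and fills in precisely the derivation the paper leaves unproved --- the text only asserts that quantifier elimination ``easily implies'' triviality, in parallel with Lemma 1.8 of \cite{MeRuSt} --- and the one step with real content, namely localizing the algebraicity of $b$ from the tuple $\bar a'$ to a single coordinate $a'_i$ via a dominant coordinate projection with finite fibres from an irreducible one-dimensional component (and, in the negated case, extracting the vertical component to put $\bar a'$ into $\acl(\emptyset)$), is carried out correctly. The only caveat is one of reading: since every singleton is itself a predicate of $\mathcal C(\overline \C_{\C})$ one has $\acl(\emptyset)=\C$, so the stated identity must be taken for nonempty $A$ (as you do), and the non-degenerate content of the corollary lives in a saturated elementary extension, where your argument applies verbatim because such a model is the curve-reduct of a large algebraically closed field.
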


In particular we get that:

\begin{cor}
The structure $\mathcal C(\overline \C_{\C})$ is a proper reduct of $\overline{\C}$.
\end{cor}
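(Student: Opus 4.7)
The plan is to combine the trivial-geometry corollary immediately above with the general principle that interdefinable structures on a common universe induce the same algebraic closure operator. By construction every basic relation of $\mathcal C(\overline \C_{\C})$ is $\C$-definable in $\overline \C$, so the fact that $\mathcal C(\overline \C_{\C})$ is a reduct is tautological; the whole content of the corollary is the qualifier \emph{proper}.

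To obtain properness I would argue by contradiction. Suppose for a contradiction that $\mathcal C(\overline \C_{\C})$ and $\overline \C$ are interdefinable as structures on the universe $\C$. Then their $\acl$-operators must coincide, because $\acl(A)$ depends only on the collection of $A$-definable finite sets, which is intrinsic to the category of definable sets. By the preceding corollary the $\acl$-operator of $\mathcal C(\overline \C_{\C})$ is trivial, so the $\acl$-operator of $\overline \C$ would also have to be trivial. To refute this I would pick $a, b \in \C$ algebraically independent over $\Q$ (which exist because $\C$ has infinite transcendence degree over $\Q$) and observe that $a+b$ lies in $\acl^{\overline \C}(\{a,b\})$ but in neither $\acl^{\overline \C}(\{a\})$ nor $\acl^{\overline \C}(\{b\})$. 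Indeed $a+b \in \acl^{\overline \C}(\{a\})$ would force $b = (a+b) - a$ to be algebraic over $\Q(a)$, contradicting the algebraic independence of $a$ and $b$, and symmetrically for $\{b\}$. This gives the required contradiction.

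There is no genuine obstacle here, since the hard work has been done by Theorem \ref{main} and its first corollary. The only point that must be stated with care is the bridge between \emph{trivial geometry}, which is a property of the $\acl$-operator of $\mathcal C(\overline \C_{\C})$, and \emph{proper reduct}, which is a statement about the collection of definable sets; but this bridge is the standard, essentially formal observation recalled above that $\acl$ is invariant under interdefinability.
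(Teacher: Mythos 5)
Your overall strategy --- deduce properness from the trivial-geometry corollary together with the failure of trivial geometry in the field --- is exactly the route the paper takes (it offers no further argument beyond ``in particular''). But the bridge you call ``standard and essentially formal'' is where the gap lies. The principle ``interdefinable structures on a common universe have the same $\acl$-operator'' is true when interdefinability means having the same $\emptyset$-definable sets, but it is \emph{false} for interdefinability with parameters, which is the relevant notion here: compare $(\Q,<)$ and $(\Q,<,0)$, which define the same sets with parameters yet have different $\acl(\emptyset)$. This failure is not a side issue in the present situation, it is the whole issue: every singleton $\{c\}$, $c\in\C$, is a $0$-dimensional $\C$-definable set, hence a basic ($\emptyset$-definable) relation of $\mathcal C(\overline\C_{\C})$. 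Consequently $\acl_{\mathcal C(\overline\C_{\C})}(A)=\C$ for every $A\subseteq\C$, the trivial-geometry identity is (degenerately) satisfied on the standard model by \emph{both} structures once the constants are accounted for, and your chosen witnesses $a,b\in\C$ algebraically independent over $\Q$ produce no contradiction: $a+b$ does lie in $\acl_{\mathcal C(\overline\C_{\C})}(\{a\})=\C$. The ``contradiction'' you reach comes only from the false general principle, not from the interdefinability hypothesis.

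The repair is to take the witnesses generic over the full parameter set. Pass to a proper elementary extension $\CM\succ\overline\C$ and pick $a,b$ algebraically independent over $\C$ (not merely over $\Q$). If every $\overline\C$-definable set were definable in $\mathcal C(\overline\C_{\C})$, then by Theorem \ref{main} (applied to the field with all elements of $\C$ named, so that $\acl(\emptyset)=\C$) the graph of addition would be a boolean combination of curve-based cylinders with parameters in $\C$, and this description persists in $\CM$. Quantifier elimination then gives $\acl_{\mathcal C}(\{a,b\})=\acl_{\mathcal C}(\{a\})\cup\acl_{\mathcal C}(\{b\})\subseteq\acl_{\overline\C}(\C\cup\{a\})\cup\acl_{\overline\C}(\C\cup\{b\})$, whereas $a+b$ lies in $\dcl_{\overline\C}(\C\cup\{a,b\})$ but, by your own exchange computation carried out over $\C$ instead of $\Q$, in neither $\acl_{\overline\C}(\C\cup\{a\})$ nor $\acl_{\overline\C}(\C\cup\{b\})$. (Alternatively, one can avoid elementary extensions entirely by a dimension count showing directly that the $2$-dimensional graph of $+$ in $\C^3$, whose projections onto each coordinate plane have finite fibers, cannot be a boolean combination of curve-based cylinders.) With one of these substitutions your proof is correct; as written, the key step fails.
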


Since Lemma \ref{twoiscurve} insures that $\mathcal B(\overline \C_{\C})$ (the structure obtained by equipping $\C$ with a predicate for each subset 
of $\C^2$ which is $\C$-definable in $\overline \C$; see Section \ref{sectionexemple}) is a reduct (in the sense of definability) of $\mathcal C(\overline \C_{\C})$, 
the structure $\mathcal B(\overline \C_{\C})$ is a proper reduct (in the sense of definability) of the structure $\overline \C$ and has trivial geometry.

\section{Algebraic and definable closure}
\label{parameters}

In the construction of Example \ref{exemple}, a key fact is that one
can not distinguish the four roots of the polynomial $X^4+X+a$, which
is an illustration that algebraic closure and definable closure are
two different notions in $\overline \C$. Proposition \ref{acl} below insures that
this condition is needed: if we consider a geometric structure $\CM$
on the universe $M$ for which $\acl()=\dcl()$ then the
structure $\mathcal C_2(\CM_{\acl(\emptyset)})$ (whose universe is $M$
and basic relations are all the $\acl(\emptyset)$-definable subsets of
$M^2$ of dimension $\leq 1$) eliminates its quantifiers.

\begin{prop}
\label{acl}
Consider a geometric structure $\CM$ on the universe $M$ such that for
all $A\subseteq M $ we have $\acl(A)= \dcl(A)$.

\begin{enumerate}
\item Any subset of $M^n$ of dimension $\leq1$ definable in $\CM$ over $\acl(\emptyset)$ is a boolean combination of cylinders, each of whose basis is either the 
graph of a function of one variable $\emptyset$-definable in $\CM$ or an element of $\dcl(\emptyset)$.
\item In particular the structure $\mathcal C_2(\CM_{\emptyset})$ on $M$ generated by all $\emptyset$-definable subsets of $M^2$ of dimension $\leq 1$ 
and the structure $\mathcal C(\CM_{\acl(\emptyset)})$ on $M$ generated by all $\acl(\emptyset)$-definable subsets of a cartesian product of $M$ of dimension 
$\leq 1$ define the same sets and have quantifier eliminations.
\end{enumerate}
\end{prop}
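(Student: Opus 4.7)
The plan is to establish (1) first and derive (2) from (1) together with Theorem \ref{main}. For (1), fix an $\acl(\emptyset)$-definable $X \subseteq M^n$ with $\dim X \leq 1$. Under the hypothesis $\acl(\emptyset) = \dcl(\emptyset)$, each parameter used in a definition of $X$ is the unique realization of some $\emptyset$-formula; hence those parameters can be replaced existentially by their $\emptyset$-definitions, reducing the problem to the case where $X$ is $\emptyset$-definable.

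In a sufficiently saturated extension of $\CM$, consider any point $\mathbf{a} = (a_1, \dotsc, a_n) \in X$. The Exchange Principle combined with $\dim X \leq 1$ forces at most one coordinate $a_{i_0}$ to lie outside $\acl(\emptyset)$; every other coordinate $a_j$ then lies in $\acl(a_{i_0}) = \dcl(a_{i_0})$, and hence equals $f_j(a_{i_0})$ for some $\emptyset$-definable partial function $f_j \colon M \to M$. In the fully degenerate case where every coordinate of $\mathbf{a}$ already lies in $\dcl(\emptyset)$, the tuple $\mathbf{a}$ is itself $\emptyset$-definable. A standard compactness argument, combined with the Uniform Finiteness Property, then shows that only finitely many configurations $\bigl(i_0;\, (f_j)_{j \neq i_0}\bigr)$, together with finitely many isolated tuples in $\dcl(\emptyset)^n$, are needed to cover $X$. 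Each graph piece is the intersection $\bigcap_{j \neq i_0} \{x_j = f_j(x_{i_0})\}$ of cylinders based on the $2$-curves $\mathrm{graph}(f_j)$; each isolated tuple $(c_1,\dotsc,c_n)$ is the intersection $\bigcap_{j=1}^{n} \{x_j = c_j\}$ of cylinders based on the singleton $1$-curves $\{c_j\} \subseteq \dcl(\emptyset)$. The resulting finite union of finite intersections is the desired boolean combination.

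For (2), Theorem \ref{main} gives quantifier elimination for $\mathcal{C}(\CM_{\acl(\emptyset)})$, so every set definable there is a boolean combination of cylinders based on $\acl(\emptyset)$-definable curves. Applying (1) to each cylinder base, any such cylinder rewrites as a boolean combination of cylinders whose bases are graphs of $\emptyset$-definable $1$-variable functions (themselves $\emptyset$-definable $2$-curves) or $\emptyset$-definable singletons (which are encodable as basic relations of $\mathcal{C}_2(\CM_{\emptyset})$, for instance via the $\emptyset$-definable $2$-curve $\{(c,y) : y \in M\}$). Hence every $\mathcal{C}(\CM_{\acl(\emptyset)})$-definable set is $\mathcal{C}_2(\CM_{\emptyset})$-definable; the reverse inclusion is immediate, since every $\emptyset$-definable $2$-curve is in particular an $\acl(\emptyset)$-definable curve. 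The same translation simultaneously upgrades the quantifier elimination of $\mathcal{C}(\CM_{\acl(\emptyset)})$ to one for $\mathcal{C}_2(\CM_{\emptyset})$.

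The hard part will be the finiteness step in (1): extracting a finite family of $\emptyset$-definable partial functions whose graphs cover the generic locus of $X$. This is where the hypothesis $\acl = \dcl$ is indispensable, since it upgrades algebraic dependence between generic coordinates into functional dependence, allowing one to select $\emptyset$-definable sections of the projection $\pi_{i_0} \colon X \to M$. Without $\acl = \dcl$, only algebraicity (not definability) of the sections is available, which is precisely the obstruction that Example \ref{exemple} exploits in $\overline{\C}$ to block quantifier elimination for $\mathcal{B}(\overline{\C}_{\C})$.
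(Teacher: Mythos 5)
Your proof is correct and follows essentially the same route as the paper's: reduce to $\emptyset$-definable sets (since $\acl(\emptyset)=\dcl(\emptyset)$), use $\dim\leq 1$ together with Exchange and $\acl=\dcl$ to cover $X$ by graphs of $\emptyset$-definable one-variable functions plus tuples from $\dcl(\emptyset)$, extract a finite subcover by compactness, and deduce (2) from (1) and Theorem \ref{main}. One phrasing slip: $\dim X\leq 1$ does not force at most one coordinate of $\mathbf{a}$ to lie outside $\acl(\emptyset)$ (consider the diagonal), only that all coordinates lie in $\acl(a_{i_0})$ for a single index $i_0$ --- which is the statement you actually use in the next clause, so the argument is unaffected.
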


\begin{proof}
  In this setting, it is clear that a set is definable in $\CM$ over
  $\acl(\emptyset)$ if and only if it is definable in $\CM$ without
  parameters.

  By definition of dimension and the assumption that $\acl()=\dcl()$,
  a formula in $n$ variables that defines a one-dimensional set in $\CM$
  over $\emptyset$ is equivalent to an infinite disjunction
\begin{equation}\label{planarcurves}
\bigvee _{i=1} ^n \bigvee _{F\in \mathcal F} \mathbf{x} =F(x_i)
\end{equation}
where $\mathcal F$ is a set of $\emptyset$-definable $1$-variable functions from $M$ to $M^n$ and $\mathbf{x}=(x_1,\ldots , x_n)$.

By compactness, we can extract an equivalent finite disjunction from (\ref{planarcurves}), which gives the first part of the proposition.

The second part easily follows (either from a direct argument or from Theorem \ref{main}). 
 
\end{proof}

In Proposition \ref{acl}, we noted that in the case of a geometric structure $\CM$ on the universe $M$ with $\acl ()= \dcl ()$, the use of parameters 
in $\acl(\emptyset)$ is not needed in the statement of Theorem \ref{main}. These parameters are however essential for Theorem \ref{main} to hold in general: 
the theory of curves {\bf $\emptyset$-definable} in a geometric structure does not, in general, admit quantifier-elimination. 

\begin{ex}
\label{fatpoints}
Let $\CM =(M;+,\cdot,\mathfrak m )$ be a saturated algebraically closed valued field of characteristic $\neq 2$,
in the language of fields with a unary predicate  $\mathfrak m$ for its maximal ideal 
(the structure is known to be geometric; see for instance Section 4 in \cite{Mcp}). 

Denote by $i$ one of the two square roots of $-1$.

The formula $\rho$ in the free variable $(x,y)$  without parameters 
\[
\exists z \big( z^2+1=0 \wedge x-z\in \mathfrak m \wedge y-z\in \mathfrak m \big)
\]
is equivalent to 
\[
(x-i\in \mathfrak m \wedge y-i\in \mathfrak m ) \vee   (x+i\in \mathfrak m \wedge y+i\in \mathfrak m)
\]
which defines a boolean combination of $\acl(\emptyset)$-definable curve-based cylinders in $M^2$.

But $\rho$ does not define a boolean combination of $\emptyset$-definable curve-based cylinders.
\end{ex}

\begin{proof} 
Proceeding toward a contradiction suppose that  such a boolean combination exists. We can suppose that it is in a disjunctive normal form 
and each of the disjunctant is of the form 
\[
\phi_1(x)\wedge \phi_2(y)\wedge \phi_3 (x,y) \wedge \neg \phi_4 (x,y)
\]
where 
\begin{itemize}
 \item $\phi _1(x)$ and $\phi_2(y)$ are formul\ae \ without parameters in the language of valued field and 
 \item$\phi_3(x,y)$ and $\phi_4(x,y)$ each define a (possibly empty) subset of $M^2$ definable in $\CM$ without parameter which is either the whole $M^2$ 
or of dimension $\leq 1$.
\end{itemize}
  
Since the formula $\rho$ defines a set of dimension $2$, there is some
disjunctant such that $\phi _3$ is a tautology, the sets $\{x\in M|\CM
\models \phi _1(x)\}$ and $\{y\in M|\CM \models \phi _2(y)\}$ have
dimension one (precisely one; not zero or $-\infty$ !), and the
formula $\phi_4$ is not a tautology. Fix such a disjunctant.

Consider $\sigma \in \Aut (\CM / \emptyset)$ sending $i$ to $-i$. Let $\beta \in M\setminus \acl (\emptyset )$ such that 
$\phi _2(\beta)$ holds. We can find $\alpha$ such that \[
\phi_1(\alpha) \wedge \neg \phi_4(\alpha,\beta) \wedge \neg \phi_4(\alpha,\sigma (\beta))
\]
holds: since $\phi_4$ defines a set of dimension $\leq 1$ and the elements $\beta$ and $\sigma(\beta)$ are transcendental, the set 
$\{x\in M|\CM \models \phi_4(x,\beta) \vee \phi_4(x,\sigma (\beta))\}$ is finite and can not cover the infinite set $\{x\in M|\CM \models \phi _1(x)\}$.

Since all $\phi_i$'s are $\emptyset$-definable and $\phi_2(\beta)$ holds, so does 
$\phi_2(\sigma(\beta))$.

Therefore 
\[
\phi_1(\alpha) \wedge \phi_2(\beta) \wedge \neg \phi_4(\alpha,\beta)
\]
and 
\[
\phi_1(\alpha) \wedge \phi_2(\sigma(\beta)) \wedge \neg \phi_4(\alpha,\sigma(\beta))
\]
both hold. 

The three points $(\alpha,\beta)$, $(\sigma(\alpha),\sigma(\beta))$ and $(\alpha,\sigma(\beta))$ would satisfy $\rho$ which can not be. 
Indeed suppose for instance that $\alpha$ and $\beta$ belong to $i+\mathfrak m$. Then $\sigma(\beta) $ belongs both to $-i+\mathfrak m$ and 
to $\alpha+\mathfrak m=i+\mathfrak m$, which is impossible. (The case when $\alpha$ and $\beta$ belong to $-i+\mathfrak m$ is similar.)
\end{proof}

\begin{ex}
We get a similar result in $\CM =(\C;+,\cdot \ )$ by considering 
\[
\exists z \big( z^2+1=0 \wedge u+v=z \wedge w+t=z \big)
\]
in the free variables $(u,v,w,t)$.
\end{ex}

\section{Higher arities}
\label{high}
For each $n\in \N$, let $\mathcal C_n (\overline{\C}_{\C})$ 
(respectively $\mathcal C (\overline{\C}_{\C})$) denotes the structure on $\C$ 
whose basic definable sets are the  subsets of $\C^k$ for all $k\leq n$ (resp. for all $k\in \N$), 
$\C$-definable in $\overline \C$, of dimension $\leq 1$. 
Similarly, $\mathcal C_n (\overline{\C}_{\acl(\emptyset)})$ (respectively $\mathcal C (\overline{\C}_{\acl(\emptyset)})$) 
denotes the structure on $\C$ whose basic definable sets are the  
subsets of $\C^k$ for all $k\leq n$ (resp. for all $k\in \N$), $\acl(\emptyset)$-definable in $\overline \C$, of dimension $\leq 1$.
 
In Section \ref{sectionexemple}, we showed that the structure $\mathcal C_2(\overline \C_{\C})$ does not have quantifier elimination 
and therefore we obtained the existence of a constructible curve in $\C^3$ that is not equivalent to a boolean combination of cylinders 
whose basis are constructible curves in $\C^2$. Here we show:

\begin{prop} 
\label{nobound}
Given any natural number $n\geq 3$ there exists a $(n+1)$-ary relation $\emptyset$-definable in 
$\mathcal C_2(\overline \C_{\acl(\emptyset)})$ which is not quantifier-free 
definable in $\mathcal C_{n-1} (\overline{\C}_{\C})$.
\end{prop}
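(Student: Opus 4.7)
The idea is to generalise Example~\ref{exemple} to higher arity. Fix $n \geq 3$ and take a polynomial $P_a(X) = X^d + X + a$ of degree $d$ sufficiently large in terms of $n$, with Galois group $\mathfrak{S}_d$ over $\C(a)$; the verification parallels the one given in the proof of Example~\ref{exemple}, adjusted only for the higher degree. Set
\[
R(y, s) := \exists z \bigl(z \neq y \wedge y^d + y = z^d + z \wedge s = y + z\bigr),
\]
which is an $\acl(\emptyset)$-definable binary curve in $\C^2$, and consider
\[
T_n(s_1, \ldots, s_{n+1}) := \exists y \bigwedge_{i=1}^{n+1} R(y, s_i),
\]
which is by construction $\emptyset$-definable in $\mathcal C_2(\overline{\C}_{\acl(\emptyset)})$.

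To show that $T_n$ is not quantifier-free definable in $\mathcal C_{n-1}(\overline{\C}_\C)$, I would pass to a sufficiently saturated elementary extension $\mathcal M$ of $\overline{\C}$, fix a transcendental $a$, and let $\zeta_1, \ldots, \zeta_d$ be the roots of $P_a$; the group $\Aut(\mathcal M/\C)$ then acts on these roots as the full $\mathfrak{S}_d$. I would then produce two $(n+1)$-tuples $\bar s^g, \bar s^b$ such that (a) $T_n(\bar s^g)$ holds but $T_n(\bar s^b)$ fails, and (b) for every index sequence $(j_1, \ldots, j_k)$ of length $k \leq n-1$, the corresponding subtuples of $\bar s^g$ and $\bar s^b$ are $\Aut(\mathcal M/\C)$-conjugate. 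Property~(b) makes these subtuples share their type over $\C$ in $\overline{\C}$, hence satisfy exactly the same $\C$-definable curves of arity at most $n-1$; combined with~(a), this rules out any quantifier-free definition of $T_n$ in $\mathcal C_{n-1}(\overline{\C}_\C)$.

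For $\bar s^g$ the natural choice is the star configuration $s^g_i := \zeta_i + \zeta_{n+2}$, witnessed by $y = \zeta_{n+2}$. The main obstacle is producing $\bar s^b$: writing $s^b_i = \zeta_{a_i} + \zeta_{b_i}$ identifies it with an unordered ``edge'' $\{a_i, b_i\}$, and condition~(b) demands that every $(n-1)$-subsequence of the $s^b_i$ be $\mathfrak{S}_d$-isomorphic (as an ordered tuple of unordered pairs) to the corresponding sub-star of $\bar s^g$, while condition~(a) requires no vertex to lie in all $n+1$ edges. For $n=2$ a triangle on three vertices realises this (as in Example~\ref{exemple}), but for $n \geq 3$ pair-edges are insufficient, since a pairwise-intersecting graph with at least four edges must be a star. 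One is therefore led to enlarge the edges by replacing $R$ with a binary relation asserting that $y$ together with $k-1$ further distinct roots of $P_a$ sum to $s$, for a suitable $k$. When $n=3$ the choice $k=3$ works, using e.g.\ the Pasch-like configuration of four triples $\{1,2,3\}, \{1,4,5\}, \{2,4,6\}, \{3,5,6\}$ on six vertices. For $n \geq 4$ a uniform-size hyperedge construction is obstructed by a sunflower-core argument (if every $(n-1)$-subfamily is a sunflower with core of size one, then overlaps among such subfamilies force a single common core across all $n+1$ edges), and one must devise a more elaborate combinatorial-algebraic configuration, with correspondingly larger $d$ and an adjusted $R$. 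Working this out for every $n \geq 3$ and verifying the resulting conjugacies is the heart of the proof.
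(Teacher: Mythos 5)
Your high-level strategy is the right one (exhibit two tuples whose short subtuples are all $\Aut(\mathcal M/\C)$-conjugate while one lies in the relation and the other does not), and you correctly diagnose why the triangle-versus-star picture of Example \ref{exemple} does not scale. But the proposal stops exactly where the proof has to start: for $n\geq 4$ you state that ``one must devise a more elaborate combinatorial-algebraic configuration'' and leave it at that, so the central construction is missing. Moreover, the obstruction you invoke against uniform hyperedges is an artifact of insisting that the \emph{good} configuration be a star (a sunflower with singleton core); drop that requirement and the obstruction disappears. The paper's solution (Lemma \ref{combi}) is precisely a uniform construction: take $X$ (resp.\ $Y$) to be the odd-size (resp.\ even-size) subsets of $\{1,\dots,n\}$, with blocks $X_i=\{\alpha : i\in\alpha\}$ and $Y_i=\{\beta : i\in\beta\}$. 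These two symmetric $n$-block systems are non-isomorphic (exactly one of $\bigcap_i X_i$, $\bigcap_i Y_i$ is nonempty, according to the parity of $n$), yet $\alpha\mapsto \alpha\,\triangle\,\{n\}$ is an isomorphism of their $\CL_{n-1}$-reducts. Encoding each block as the sum of the corresponding set of roots of $Z^N+Z^{N-1}+a$ --- injectively, by Lemma \ref{elements}, since the Galois group over $\C(a)$ is $\mathfrak S_N$ --- then produces the relations $S$ and $T$ of Claim \ref{AB}. Finding such a pair of set systems is the heart of the proof, and it is the part you have not supplied.

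There is a second, smaller gap. In your $T_n(s_1,\dots,s_{n+1})=\exists y\bigwedge_i R(y,s_i)$ the parameter $a$ is not a coordinate: a putative witness $y$ for the bad tuple determines its own value $a'=-(y^d+y)$, which need not equal the $a$ used to build the tuple, so ``the chosen hyperedges have no common root'' does not by itself refute $T_n(\bar s^{\,b})$. Example \ref{exemple} closes this loophole by the explicit computation with the $\zeta'_i$ at the end of its proof; you would need an analogous rigidity argument for each $n$ and each edge size $k$, and you do not address it. The paper avoids the issue entirely by taking $u=a$ as the last coordinate of the $(n+1)$-ary relations $S(s_1,\dots,s_n,u)$ and $T(t_1,\dots,t_n,u)$, so that the polynomial, hence the root set $\Theta(a)$, is pinned down and membership in $S$ or $T$ reduces to the combinatorics of subset sums (Lemmata \ref{nonempty} and \ref{binarization}). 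Your $n=3$ Pasch-type configuration is plausible, but even there you would owe the reader the Galois-group computation for $X^d+X+a$ and the rigidity step.
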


Proposition \ref{nobound} and Example \ref{exemple} give in particular that none of the structures $\mathcal C_n (\overline{\C}_{\C})$ have quantifier-elimination, 
for $n\geq 2$.

Since by Theorem \ref{main} any set definable in $\mathcal C_2(\overline \C_{\acl(\emptyset)})$ is equivalent to a boolean 
combination of cylinders whose basis are $\acl(\emptyset)$-definable curves of $\C^k$ for some $k\leq n+1$, we get:

\begin{cor}
\label{2(n-1)}
For any natural number $n\geq 2$ there is a subset of $\C^{n+1}$, $\acl(\emptyset)$-definable in $\overline \C$, of dimension $1$ which is not 
equivalent to any boolean combination of cylinders whose basis are $k$-curves with $k\leq \max \{2,n-1\}$, $\C$-definable in $\overline \C$.  
\end{cor}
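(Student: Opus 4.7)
My plan is to apply Theorem \ref{main} to the relation provided by Proposition \ref{nobound}---or the ternary relation $T$ from Example \ref{exemple} in the boundary case $n=2$---and from its canonical decomposition extract a witness curve of dimension one in $\C^{n+1}$. In each case I have in hand an $(n+1)$-ary relation $R\subseteq\C^{n+1}$ that is $\emptyset$-definable in $\mathcal C_2(\overline\C_{\acl(\emptyset)})$ yet is not quantifier-free definable in $\mathcal C_{\max\{2,n-1\}}(\overline\C_\C)$; when $n=2$ this last fact is the assertion of Example \ref{exemple}, after using Fact \ref{twoiscurve} to identify quantifier-free definability in $\mathcal B(\overline\C_\C)$ with that in $\mathcal C_2(\overline\C_\C)$.

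First I would apply Theorem \ref{main} to $\CM=\overline\C$: this rewrites $R$ as a boolean combination of cylinders based on $\acl(\emptyset)$-definable curves $C_1,\dotsc,C_m$ of arities $k_j\leq n+1$. Then I would argue by contradiction: if every $C_j$ of arity $k_j>\max\{2,n-1\}$ were itself a boolean combination of cylinders based on $\C$-definable curves of arity at most $\max\{2,n-1\}$, substituting these expressions back into the decomposition of $R$ would place $R$ in $\mathcal C_{\max\{2,n-1\}}(\overline\C_\C)$, a contradiction. Hence some $C_j\subseteq\C^{k_j}$ with $k_j\in\{n,n+1\}$ fails to be such a boolean combination. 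Finite ($0$-dimensional) $\acl(\emptyset)$-definable curves are evidently expressible using the $1$-ary curves $\{x=a\}$ with $a\in\acl(\emptyset)$, so I may take $\dim C_j=1$.

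If $k_j=n+1$, this $C_j$ is the sought dimension-one subset of $\C^{n+1}$. The remaining case $k_j=n$ is where the main work lies: I would replace $C_j$ by $C_j\times\{0\}\subseteq\C^{n+1}$, which is $\acl(\emptyset)$-definable and still of dimension one. The key point, and the step needing some care, is that this product is still not a boolean combination of cylinders based on $\C$-definable curves of arity at most $\max\{2,n-1\}$. My plan is to substitute $x_{n+1}=0$ in any hypothetical such expression: each cylinder based on a curve $D$ whose index list includes $n+1$ is replaced by a cylinder based on the $\C$-definable slice $\{(y_1,\dotsc,y_{k'-1})\mid (y_1,\dotsc,y_{k'-1},0)\in D\}$, itself of dimension at most one and of strictly smaller arity. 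The resulting quantifier-free formula in $\mathcal C_{\max\{2,n-1\}}(\overline\C_\C)$ would then define $C_j$, contradicting the choice of $C_j$ and completing the proof.
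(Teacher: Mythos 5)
Your proof is correct and follows the route the paper intends: the corollary is stated there as an immediate consequence of Theorem \ref{main} applied to the relation of Proposition \ref{nobound} (resp.\ Example \ref{exemple} together with Fact \ref{twoiscurve} for $n=2$), which is exactly your argument by contradiction. The only details you supply beyond the paper's one-line derivation --- discarding zero-dimensional base curves and, when the extracted witness has arity $n$, padding it to $C_j\times\{0\}\subseteq\C^{n+1}$ and handling a hypothetical decomposition by the substitution $x_{n+1}=0$ --- are correct and simply fill small gaps the paper leaves implicit.
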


\medskip

Let $\CM$ be a sufficiently  saturated extension of $\overline \C$ with universe $M$.

\begin{clm}
\label{AB}
There are two relations $S(s_1,\ldots,s_n,u)$ and
$T(t_1,\ldots,t_n,u)$ both $\emptyset$-definable in $\mathcal
C_2(\overline \C_{\acl (\emptyset )})$ such that we
have that
\begin{itemize}
\item[({\bf A})] if $a\in M\setminus \C$, $(s_1,\ldots,s_n,a)$ satisfies $S$ and $(t_1,\ldots,t_n,a)$
  satisfies $T$ then we have that
\[
(s_{\sigma(1)},\ldots,s_{\sigma(n-1)})\equiv _{\C\cup\{ a \}} (t_{\sigma(1)},\ldots,t_{\sigma(n-1)})
\] 
for all injection $\sigma$ from $\{1,\ldots ,n-1\}$ to $\{1,\ldots,n\}$ (the elementary equivalence being in the sense of $ACF_0$) . 
\item[({\bf B})] if $a\in M\setminus \C$, the sets 
\[
\{(u_1\ldots,u_n)\in M^n | \CM \models S(u_1,\ldots,u_n,a)\wedge \neg T(u_1,\ldots,u_n,a)\}
\]
and
\[
\{(u_1\ldots,u_n)\in M^n | \CM \models T(u_1,\ldots,u_n,a)\}
\]
are non-empty.
\end{itemize}
\end{clm}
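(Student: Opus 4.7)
The plan is to extend the Galois-theoretic construction of Example \ref{exemple} to arbitrary arity $n\geq 3$. Take the trinomial $P(X,u) := X^{n+1} + X + u \in \Z[X,u]$ and verify that $\mathrm{Gal}(P(X,a)/\C(a)) = \mathfrak{S}_{n+1}$ for $a \in M \setminus \C$; this generalizes the $n=3$ verification of Example \ref{exemple} and should follow by analogous irreducibility and discriminant computations relying on the transcendence of $a$ over $\C$. Let $\zeta_1,\ldots,\zeta_{n+1}\in M$ be the roots of $P(X,a)$.

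The basic ingredient is the plane algebraic curve
\[
R(y,s) := \{(y,s)\in \C^2 \,:\, y^{n+1}+y = (s-y)^{n+1}+(s-y) \text{ and } s\neq 2y\},
\]
which after factoring out the component $s=2y$ becomes the zero-locus of the $\Z$-coefficient polynomial $g(y,s):=\sum_{k=0}^n y^{n-k}(s-y)^k + 1$, and is therefore an $\acl(\emptyset)$-definable $2$-curve. Geometrically, $(y,s)\in R$ iff $y$ and $z:=s-y$ are two distinct roots of $X^{n+1}+X+u$ with $u:=-y^{n+1}-y$. Combined with the $\acl(\emptyset)$-definable $2$-curve $\{(y,u): y^{n+1}+y+u=0\}$, the curve $R$ provides enough raw material to encode "shared-root" conditions on pair-sums by first-order formulas of $\mathcal C_2(\overline{\C}_{\acl(\emptyset)})$.

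Define
\[
S(s_1,\ldots,s_n,u) \;\Longleftrightarrow\; \exists y\Bigl(y^{n+1}+y+u=0 \wedge \bigwedge_{i=1}^n R(y,s_i) \wedge \bigwedge_{i\neq j} s_i\neq s_j\Bigr),
\]
so that the tuples satisfying $S$ at $u=a$ are exactly the ordered "stars" $(\zeta_k+\zeta_{i_1},\ldots,\zeta_k+\zeta_{i_n})$ of pair-sums centered at a common root $\zeta_k$. For $T$, the plan is to write an analogous formula (built from $R$ and existential quantifications over the $2$-curve $y^{n+1}+y+u=0$) isolating a second Galois orbit whose ordered $(n-1)$-sub-tuples coincide, as $\mathfrak{S}_{n+1}$-orbits, with those of $S$. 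For $n=3$ one takes
\[
T(t_1,t_2,t_3,u) \;\Longleftrightarrow\; \bigwedge_{1\leq i<j\leq 3}\exists y\bigl(y^{4}+y+u=0 \wedge R(y,t_i)\wedge R(y,t_j)\bigr) \wedge \lnot S(t_1,t_2,t_3,u),
\]
whose witnesses are exactly the "triangle" tuples $(\zeta_i+\zeta_j,\zeta_i+\zeta_k,\zeta_j+\zeta_k)$. Verification of (A) then reduces to the Galois-theoretic fact that $\mathrm{Gal}(\overline{\C(a)}/\C(a))=\mathfrak{S}_{n+1}$ and each of its elements extends, by saturation of $\CM$, to an element of $\Aut(\CM/\C)$, so that $\mathfrak{S}_{n+1}$-conjugacy of the $(n-1)$-subtuples of $S$- and $T$-witnesses yields $ACF_0$-elementary equivalence over $\C\cup\{a\}$. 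Condition (B) is witnessed by the explicit star $(\zeta_1+\zeta_{n+1},\ldots,\zeta_n+\zeta_{n+1})$ in $S\setminus T$ and, for $n=3$, the triangle tuple in $T$.

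The hardest part is extending the definition of $T$ uniformly to $n\geq 4$. A Helly-type argument shows that if $n\geq 4$ distinct pair-sums have the property that every $(n-1)$-subset shares a common root among the $\zeta_i$'s, then in fact all $n$ share a common root (take the shared root $\zeta_{k_j}$ of $\{s_i: i\neq j\}$; since every $s_i$ with $i\neq 1,2$ must contain both $\zeta_{k_1}$ and $\zeta_{k_2}$, either all $\zeta_{k_j}$ agree or two distinct $s_i$'s coincide). Consequently, the naive generalization of the $n=3$ triangle formula collapses $T$ onto $S$ and destroys condition (B). Overcoming this obstruction is the real crux: one must either replace pair-sums by sums of more roots drawn from a higher-degree polynomial (where the Helly-type collision can be circumvented), or allow $S$ and $T$ to be unions of several $\mathfrak{S}_{n+1}$-orbits whose ordered $(n-1)$-projections coincide, chosen so as to satisfy both (A) and (B) simultaneously.
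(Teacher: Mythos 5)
Your proposal reproduces the paper's argument only for $n=3$: the star/triangle construction is essentially Example \ref{exemple} with the parameter $a$ promoted to a variable $u$, and your Helly-type observation correctly identifies why pair-sums of roots of a degree-$(n+1)$ polynomial cannot separate two suitable configurations once $n\geq 4$. But that is exactly where the proposal stops. The Claim must be proved for every $n\geq 3$, and for $n\geq 4$ you only name two possible escape routes without carrying either one out; the missing construction is precisely the nontrivial content of the Claim, so this is a genuine gap rather than a routine verification left to the reader.

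The paper's resolution is the first route you gesture at, made precise by a purely combinatorial lemma (Lemma \ref{combi}): let $X$ be the family of odd-cardinality subsets of $\{1,\dots,n\}$ and $Y$ the family of even-cardinality subsets, with $X_i$ (resp.\ $Y_i$) the members containing $i$. These two symmetric set systems are non-isomorphic (exactly one of $\bigcap_i X_i$ and $\bigcap_i Y_i$ is nonempty), yet deleting any one predicate makes them isomorphic via $\alpha\mapsto\alpha\,\triangle\,\{n\}$. One then takes the $N=2^{n-1}$ roots of $Z^N+Z^{N-1}+a$ (whose Galois group over $\C(a)$ is all of $\mathfrak S_N$ by a cited result of Brink --- note that for general degree this is not just an ``irreducibility and discriminant'' computation as in your quartic case), encodes $s_i$ as the sum of the $2^{n-2}$ roots indexed by $X_i$ and $t_i$ as the sum indexed by $Y_i$, and uses the injectivity of subset-sums of roots (Lemma \ref{elements}) to recover the set system from the sums; property ({\bf A}) then follows because the partial isomorphism of set systems extends to a field automorphism over $\C\cup\{a\}$, and ({\bf B}) because $\CX\not\cong\CY$. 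Finally these relations are rewritten using only binary predicates via the relation ``$u$ is a sum of $2^{n-2}$ distinct roots of $Z^N+Z^{N-1}-(v^N+v^{N-1})$, one of which is $v$'' (Lemma \ref{binarization}). None of this machinery --- neither the odd/even set systems nor the passage to sums of $2^{n-2}$ roots of a degree-$2^{n-1}$ polynomial --- appears in your write-up, so as it stands your argument establishes the Claim only in the case $n=3$.
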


The construction of such $S$ and $T$ and the proof that they satisfy these requirements will be the object of Lemmata \ref{elementaryequivalent},
\ref{nonempty} and \ref{binarization}. Let us admit for the moment their existence and prove Proposition \ref{nobound}.

\begin{proof}
Suppose for a contradiction that $T$ is equivalent to a boolean combination of formul\ae \  $\C$-definable in $\overline \C$, each involving at most $(n-1)$ of the 
possible variables. Let $U$ be one of these $(n-1)$-ary relations. Then there is some $k\leq n$ such that $U$ does not involve the $k^{\mathrm{th}}$ variable 
($U$ should also either not involve the last variable  or not involve the $l^{\mathrm{th}}$ variable for some $l\neq k \leq n$). 

Fix $a\in M\setminus \C$, $(\mathbf s,a)\models S$ and $(\mathbf t,a)\models T$. 

Since any subtuple of $(\mathbf s,a)$ of length $\leq n-1$ is elementary equivalent to the corresponding subtuple of  $(\mathbf t,a)$, since 
the relation $U$ involves at most $(n-1)$ variables and since $(\mathbf t,a)\models U$, we get that $(\mathbf s,a)\models U$.

The same being true for all such $U$, we get the implication 
\[
S(\mathbf s,a)\rightarrow T(\mathbf s,a),
\] 
a contradiction with ({\bf B}).
\end{proof}

Fix a natural number $N$. Given $a\in M$ we denote by $\Theta(a)$ the
set of roots of the polynomial $Z^N+Z^{N-1}+a$.  The following Lemma
tells us that the collection of sums of distinct elements of
$\Theta(a)$ is in bijection with the power set $\mathcal P (\Theta(a))$.  This will
allow us to encode some finite combinatorics in $\mathcal M$.

\begin{lem}
\label{elements}
Let $a\in M \setminus \C$. For each natural number $1\leq k \leq N$,
let $[ \Theta (a) ] ^k $ be the collection of all the subsets
of $\Theta (a)$ of size $k$.

The mapping from $[\Theta (a)]^k $ to $M$ sending $A$ to 
$\sum _{z \in A} z$ is injective.
\end{lem}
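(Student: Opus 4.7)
The plan is a proof by contradiction whose main content is a Galois-theoretic computation. Suppose $A, B \in [\Theta(a)]^k$ are distinct with $\sum_{z \in A} z = \sum_{z \in B} z$. Replacing $A$ by $A \setminus B$ and $B$ by $B \setminus A$ reduces the problem to showing that no two \emph{disjoint nonempty equinumerous} subsets of $\Theta(a)$ have the same sum; from now on I assume that $A$ and $B$ are such. The crucial step is then to prove that the Galois group $G = \mathrm{Gal}(\C(\Theta(a))/\C(a))$ equals the full symmetric group $\mathfrak S_N$, thereby generalising to arbitrary $N$ the $N = 4$ calculation carried out in Example \ref{exemple}. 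Granting this, fix any $\alpha \in A$ and $\beta \in B$: the transposition $\tau = (\alpha\,\beta)$ lies in $G$, and applying it to the assumed equality gives $\sum_{z \in \tau(A)} z = \sum_{z \in \tau(B)} z$. Subtracting the two identities yields $2(\alpha - \beta) = 0$, hence $\alpha = \beta$ (the characteristic being zero), contradicting $A \cap B = \emptyset$.

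For the Galois group computation I would proceed in three steps. First, $Z^N + Z^{N-1} + a$ is separable over $\C(a)$ --- its discriminant is a nonzero polynomial in $a$ and $a$ is transcendental --- so $|\Theta(a)| = N$. Second, it is irreducible over $\C(a)$: viewed in $\C[Z][a]$ it is linear in $a$, hence irreducible in $\C[a, Z]$, and Gauss's lemma transfers irreducibility to $\C(a)[Z]$; in particular $G$ acts transitively on $\Theta(a)$. Third, $G$ must contain both an $(N-1)$-cycle and a transposition: for this I would identify $G$ with the monodromy group of the degree-$N$ branched covering $Z \mapsto -Z^N - Z^{N-1}$ of the $a$-line. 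Its derivative $Z^{N-2}(NZ + (N-1))$ isolates two critical points: $Z = 0$ (around which the covering factors locally as $(\mathrm{unit}) \cdot Z^{N-1}$, so the local monodromy is an $(N-1)$-cycle) and the simple critical point $Z = -(N-1)/N$ (contributing a transposition).

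To conclude that $G = \mathfrak S_N$, observe that a transitive subgroup of $\mathfrak S_N$ containing an $(N-1)$-cycle $\rho$ is automatically primitive: any nontrivial block containing the unique fixed point of $\rho$ would be forced by $\rho$ to contain every other element of $\{1,\ldots,N\}$ as well. Moreover, a primitive subgroup of $\mathfrak S_N$ containing a transposition must equal $\mathfrak S_N$: the set of its transpositions forms a $G$-invariant graph, which has to be connected by primitivity, and transpositions labelling the edges of a connected graph on $N$ vertices always generate the full symmetric group. The main obstacle in the whole plan is the third ingredient in the Galois computation: appealing to complex monodromy is conceptually clean, but tacitly rests on the comparison between topological and algebraic fundamental groups of $\mathbb A^1_\C$ minus finitely many points; a purely algebraic substitute would mimic the resolvent-and-discriminant analysis of Example \ref{exemple}, but this rapidly becomes combinatorially unwieldy for general $N$.
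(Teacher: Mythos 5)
Your proof is correct and rests on the same two pillars as the paper's: the fact that $\mathrm{Gal}\big(Z^N+Z^{N-1}+a\,/\,\C(a)\big)=\mathfrak S_N$, followed by a transposition argument applied to two disjoint subsets of $\Theta(a)$ with equal sums. Two local differences are worth recording. First, your endgame is cleaner: after reducing to disjoint nonempty $A,B$ (via $A\setminus B$ and $B\setminus A$, rather than the paper's choice of a counterexample with $|A|$ minimal), the single transposition $(\alpha\,\beta)$ with $\alpha\in A$, $\beta\in B$ yields $2(\alpha-\beta)=0$ uniformly, whereas the paper splits into the cases $A\cup A'=\Theta(a)$ (using $\sum_{z\in A}z=-1/2$) and $A\cup A'\neq\Theta(a)$ (transposing an element of $A$ with one outside $A\cup A'$). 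Second, the paper simply cites Brink for the Galois group being the full symmetric group, while you prove it: your irreducibility argument (linearity in $a$ plus Gauss's lemma), your identification of the two finite critical points of $Z\mapsto -Z^N-Z^{N-1}$ giving an $(N-1)$-cycle and a transposition, and the group-theoretic conclusion (transitive plus an $(N-1)$-cycle implies primitive; primitive plus a transposition implies $\mathfrak S_N$) are all correct. The appeal to the equality of the Galois group with the topological monodromy group over $\C$ is legitimate, though, as you note, it imports the Riemann existence theorem; citing the known result, as the paper does, is the economical alternative, but your version has the merit of being self-contained.
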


\begin{proof}

It follows from Galois theory (see the proof of Theorem 9 in \cite{Br}) that:
\[
\mathrm{Gal}\big( Z^N+Z^{N-1}+a\, / \,\C (a) \big)=\mathfrak S _N.
\]

Suppose for a contradiction that we have subsets $A\neq A'$ of
$\Theta(a)$ such that $|A|=|A'|$ and $\sum _{z \in A} z=\sum _{z
  \in A'} z$.  Without loss of generality we can assume that $|A|=|A'|$ minimal; in particular this implies  
$A\cap A'=\emptyset$.

If $|A|=1$, we clearly have a contradiction.
Thus we must have $|A|>1$. 

Assume first $\Theta(a)=A \cup A'$. Then $-1=\sum_{z \in A} z+\sum_{z
  \in A'} z$ so $\sum_{z\in A} z=-1/2\in \C$. Let $\zeta\in A$ and
$\zeta'\in A'$ be arbitrary chosen and let $\sigma$ be the element of
$\mathrm{Gal}\big( Z^N+Z^{N-1}+a\, / \,\C(a) \big)$ interchanging  $\zeta$
and $\zeta'$ and fixing the other roots. We have
\[ 
\sum_{z \in A} z = -1/2=\sigma(-1/2)=\sum_{z \in A} \sigma(z),
\]
hence  $\zeta=\zeta'$, contradicting the fact that $A\cap A'=\emptyset$.

We can thus assume $A\cup A'\neq \Theta(a)$. Let $\zeta\in A$,
$\zeta'\in \Theta(a) \setminus (A\cup A')$ and let 
$\sigma$ be the permutation interchanging $\zeta$ and $\zeta'$ and fixing the
other roots. We have
\[
\sum _{z \in A} \sigma(z)=
\sum _{\alpha \in A'} \sigma(z)=\sum _{z \in A'} z=\sum _{z \in A} z 
\]
that  gives $\zeta=\zeta'$, contradicting $\zeta'\notin A$.
\end{proof}

We now generalize the combinatorial configuration ``{\it triangle versus star}" appearing in the figure of Example \ref{exemple}. 

\begin{defi}
For $n\in \N$ we will denote by $\CL_n$ the first order language 
$\{P_1,\dotsc,P_n\}$ where each $P_i$ is a unary predicate ($\CL_0$ being the language of pure equality).

Let $n>1$ and $\mathcal F=\langle F; F_1,\dotsc F_n\rangle$ be an $\CL_n$-structure ({\it i.e.} 
$F_i$ is an interpretation of $P_i$ in $\mathcal F$).

We say that $\mathcal F$ 
is {\em symmetric} if for any permutation $\sigma \in \mathfrak S_n$ 
the structure $\mathcal F$ is isomorphic to the $\CL_n$-structure 
$\langle F; F_{\sigma(1)}\dotsc,F_\sigma(n)\rangle$ ({\it i.e.}
there is a bijection $\tilde \sigma\colon F\to F$ such that 
$\gamma\in F_i $ if and only if $\tilde\sigma(\gamma)\in F_{\sigma(i)}$).  
\end{defi}

\begin{lem}
\label{combi}
For any $n>1$ there are finite symmetric $\CL_n$-structures 
$\CX=\langle X; X_1,\dotsc, X_n\rangle$ and $\CY=\langle Y; Y_1,\dotsc, Y_n\rangle$
such that $\CX$ and $\CY$ are not isomorphic, but their reducts to $\CL_{n-1}$ are isomorphic.
\end{lem}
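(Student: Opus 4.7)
I will first pin down the isomorphism invariants of a finite symmetric $\CL_n$-structure. Any $\CL_n$-structure on a finite set is determined up to isomorphism by the multiplicity $m_T$ with which each atomic type $T \subseteq \{1, \ldots, n\}$ is realized --- that is, the number of elements $\gamma$ with $\{i : \gamma \in F_i\} = T$ --- and the symmetry hypothesis forces $m_T$ to depend only on $k := |T|$, so I write $m_k$. Forgetting $P_n$ (the choice of predicate being immaterial by symmetry), an element of $\CL_{n-1}$-type $T \subseteq \{1, \ldots, n-1\}$ of size $k$ has $\CL_n$-type either $T$ or $T \cup \{n\}$; hence the reduct's multiplicity at $T$ equals $m_k + m_{k+1}$.

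I therefore seek distinct non-negative integer sequences $(x_k)_{k=0}^n \neq (y_k)_{k=0}^n$ with $x_k + x_{k+1} = y_k + y_{k+1}$ for every $0 \leq k < n$. Setting $d_k := x_k - y_k$, this amounts to $d_{k+1} = -d_k$, so the simplest nontrivial solution is $y_k \equiv 1$ and $x_k = 1 + (-1)^k$ (that is, $x_k = 2$ for $k$ even and $x_k = 0$ for $k$ odd).

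To realize these profiles I take
\[
\CY := \langle \mathcal{P}(\{1, \ldots, n\});\ Y_1, \ldots, Y_n \rangle, \qquad Y_i := \{S : i \in S\},
\]
and
\[
\CX := \langle \{S \subseteq \{1, \ldots, n\} : |S| \text{ even}\} \times \{0,1\};\ X_1, \ldots, X_n \rangle, \qquad X_i := \{(S, \epsilon) : i \in S\}.
\]
Both are manifestly symmetric via the natural $\mathfrak{S}_n$-action on $\{1, \ldots, n\}$, lifted to the universes by $S \mapsto \sigma(S)$ and $(S, \epsilon) \mapsto (\sigma(S), \epsilon)$ respectively. They are not isomorphic as $\CL_n$-structures: for instance the empty type is realized with multiplicity $1$ in $\CY$ and $2$ in $\CX$. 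By the computation above their $\CL_{n-1}$-reducts both realize every $\CL_{n-1}$-type with multiplicity exactly $2$, so any bijection matching type-classes is an isomorphism. The only delicate point is the opening claim that a finite $\CL_{n-1}$-structure is classified up to isomorphism by its $2^{n-1}$ type-multiplicities, but this is immediate as such a structure is just a labelled partition of a finite set into $2^{n-1}$ classes.
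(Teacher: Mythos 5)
Your proof is correct, and it reaches the lemma by a genuinely different route than the paper. The paper directly exhibits $\CX$ as the odd-size subsets and $\CY$ as the even-size subsets of $\{1,\dotsc,n\}$ (each of size $2^{n-1}$, with $X_i,Y_i$ the membership predicates), proves the $\CL_{n-1}$-reducts isomorphic via the explicit bijection that toggles membership of $n$, and distinguishes the $\CL_n$-structures by observing that exactly one of $\bigcap_i X_i$, $\bigcap_i Y_i$ is nonempty. You instead first establish the clean classification principle that a finite $\CL_m$-structure is determined up to isomorphism by its atomic type multiplicities, note that symmetry collapses these to a sequence $(m_k)_k$ indexed by type size, compute that passing to the $\CL_{n-1}$-reduct replaces $m_k$ by $m_k+m_{k+1}$, and then solve the resulting linear recurrence $d_{k+1}=-d_k$ to produce the profiles $(1,1,\dots)$ and $(2,0,2,0,\dots)$, realized by the full power set and by two copies of the even-size subsets. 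Your invariant-based argument is more systematic: it makes transparent exactly which pairs of symmetric structures have isomorphic reducts, whereas the paper's argument is shorter and entirely explicit. Both constructions ultimately exploit the same parity phenomenon, and both produce structures with $|X|=|Y|$ and $|X_i|=|Y_j|$ for all $i,j$, which is what the subsequent Galois-theoretic encoding in the paper actually needs. The one claim you flag as delicate --- that type multiplicities classify finite $\CL_m$-structures up to isomorphism --- is indeed immediate, as you say, and your verification of symmetry, non-isomorphism (multiplicity of the empty type is $1$ versus $2$), and reduct isomorphism (every reduct type realized exactly twice in each) is complete.
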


\begin{proof}

Set \[
X:=\{\alpha \subseteq \{1,\ldots ,n\}|\ |\alpha|\mbox{ is odd}\}
\]
and
\[
Y:=\{\beta \subseteq \{1,\ldots ,n\}|\ |\beta|\mbox{ is even}\}.
\]

For each $i\in \{1,\ldots ,n\}$, let 
\[
X_i:=\{\alpha \in X| i\in \alpha\}
\]
and
\[
Y_j:=\{\beta \in Y| j\in \beta \}.
\]

One can easily verify that $|X|=|Y|=2^{n-1}$ and that the $\CL_n$-structures $\CX=\langle X;X_1,\ldots,X_n\rangle$ and $\CY=\langle Y,Y_1,\ldots,Y_n\rangle$ 
are symmetric.

Consider the mapping
\[
\Phi: X \to Y
\] given by 
\[
\Phi(\alpha)= \begin{cases} \alpha \setminus \{n \}
				& \mbox{ if } n\in \alpha, \\
\alpha \cup \{ n \} & \mbox{ else.} \end{cases}.
\]
Clearly $\Phi$ is a bijection between $X$ and $Y$ and for $1\leq i \leq n-1$, we have 
\[
 (\alpha \in X_i) \Leftrightarrow (i\in \alpha ) \Leftrightarrow (i\in \Phi(\alpha)) \Leftrightarrow (\Phi(\alpha)\in Y_i).
\] 
That is, $\Phi$ is an isomorphism between the $\CL_{n-1}$-structures $\langle X;X_1,\ldots,X_{n-1}\rangle$ and 
$\langle Y;Y_1,\ldots,Y_{n-1}\rangle$.

Finally, to see that $\mathcal X$ and $\mathcal Y$  are not isomorphic
(as $\mathcal L_n$-structures), note that one and only one of the two sets $\bigcap_{1\leq i \leq n} X_i$ and
$\bigcap_{1\leq j \leq n} Y_j$ is non-empty: 
\begin{itemize}
\item if $n$ is even then $\bigcap_{1\leq i \leq n} X_i=\emptyset$ and
  $\bigcap_{1\leq i \leq n} Y_i=\{1,\ldots , n\}$, and
\item if $n$ is odd then $\bigcap_{1\leq i \leq n} Y_i=\emptyset$ and
  $\bigcap_{1\leq i \leq n} X_i=\{1\ldots,n\}$.
\end{itemize} 
Therefore there is no bijection between $X$ and $Y$ sending each $X_i$
to $Y_i$.
\end{proof}

For the rest of the Section, we let $\CX=\langle
X;X_1,\ldots,X_n\rangle$ and $\CY=\langle Y;Y_1,\ldots,Y_n\rangle $ be
two symmetric $\CL_n$-structures satisfying the conclusion of Lemma
\ref{combi}. We let $N=|X|=|Y|$ and, as in Lemma \ref{elements}, we let
$\Theta(a)$ denote the set of roots of the polynomial $Z^N+Z^{N-1}+a$. 
Note that if $a\in M\setminus \C$ then $Z^N+Z^{N-1}+a$ has $N$ distinct roots, and
$|\Theta(a)|=|X|=|Y|$. 

\medskip
Consider now the relations $S'$ and $T'$ given by 
\begin{enumerate}
\item $S'(s_1,\ldots,s_n,a)$ holds 
if and only if  
there is a bijection $\phi$ between $X$ and $\Theta(a)$ such that 
\[
\mbox{for all $1\leq i \leq n$, }  s_i=\sum_{\alpha \in X_i} \phi(\alpha)
\]
\item $T'(t_1,\ldots,t_n,a)$ holds if and only if  
there is a bijection $\psi$ between $Y$ and $\Theta(a)$ such that 
\[
\mbox{for all $1\leq i \leq n$, }  t_i=\sum_{\beta \in Y_i} \psi(\beta).
\]
\end{enumerate}
Using Lemma \ref{elements} to transfer the combinatorial properties of $\CX$ and $\CY$, we will show that these relations, definable in $\CM$, 
satisfy properties ({\bf A}) and ({\bf B}) of Claim \ref{AB}.

\begin{lem}
\label{elementaryequivalent}
Fix $a\in M\setminus \C$. 
Let   $\phi$ be a  bijection between $X$ and $\Theta(a)$,
and  $\psi$ be a bijection between $Y$ and $\Theta(a)$.

For $i\in\{1,\ldots,n\}$, let 
\[
s_i=\sum_{\alpha \in X_i} \phi(\alpha)
\]
and
\[
t_i=\sum_{\beta \in Y_i} \psi(\beta).
\]

Then the tuples 
$(s_{\sigma(1)},\ldots,s_{\sigma(n-1)})$ and $(t_{\tau(1)},\ldots,t_{\tau(n-1)})$ are elementary equivalent  over $\C\cup\{a\}$ (in  the theory of  $\overline \C$)
for all injections $\sigma$ and $\tau$ from $\{1,\ldots,n-1\}$ to $\{1,\ldots,n\}$.
\end{lem}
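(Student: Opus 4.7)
The plan is to exhibit an automorphism $\hat\pi$ of $\CM$ that fixes $\C\cup\{a\}$ pointwise and sends the tuple $(s_{\sigma(1)},\ldots,s_{\sigma(n-1)})$ to $(t_{\tau(1)},\ldots,t_{\tau(n-1)})$; as $\CM$ is saturated, this will give not merely elementary equivalence but equality of complete types over $\C\cup\{a\}$, which is more than enough.

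The heart of the argument is the construction of a bijection $\eta\colon X\to Y$ which is an isomorphism of $\CL_{n-1}$-structures
\[
\langle X; X_{\sigma(1)},\ldots,X_{\sigma(n-1)}\rangle \;\cong\; \langle Y; Y_{\tau(1)},\ldots,Y_{\tau(n-1)}\rangle,
\]
i.e.\ satisfying $\alpha\in X_{\sigma(j)}\Leftrightarrow \eta(\alpha)\in Y_{\tau(j)}$ for every $j\in\{1,\ldots,n-1\}$. To produce $\eta$, I would extend $\sigma$ and $\tau$ to permutations $\tilde\sigma,\tilde\tau$ of $\{1,\ldots,n\}$; the symmetry of $\CX$ then supplies a self-bijection of $X$ witnessing $\langle X;X_1,\ldots,X_{n-1}\rangle\cong\langle X;X_{\sigma(1)},\ldots,X_{\sigma(n-1)}\rangle$ (by restricting to the first $n-1$ predicates the isomorphism associated with $\tilde\sigma$), and symmetrically for $\CY$. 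Composing these with the $\CL_{n-1}$-isomorphism $\langle X;X_1,\ldots,X_{n-1}\rangle \cong \langle Y;Y_1,\ldots,Y_{n-1}\rangle$ supplied by Lemma \ref{combi} yields $\eta$.

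Next, I would set $\pi := \psi\circ\eta\circ\phi^{-1}$, a permutation of $\Theta(a)$. Since the proof of Lemma \ref{elements} established $\mathrm{Gal}\bigl(Z^N+Z^{N-1}+a\,/\,\C(a)\bigr)=\mathfrak S_N$, every permutation of $\Theta(a)$ extends to a field automorphism of the splitting field of $Z^N+Z^{N-1}+a$ over $\C(a)$ that fixes $\C(a)$ pointwise; because $\CM$ is algebraically closed, such an automorphism extends further (via any transcendence basis) to an automorphism $\hat\pi$ of $\CM$ fixing $\C\cup\{a\}$ pointwise.

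The conclusion is then the routine calculation
\[
\hat\pi(s_{\sigma(j)})=\sum_{\alpha\in X_{\sigma(j)}}\hat\pi(\phi(\alpha))=\sum_{\alpha\in X_{\sigma(j)}}\psi(\eta(\alpha))=\sum_{\beta\in \eta(X_{\sigma(j)})}\psi(\beta)=\sum_{\beta\in Y_{\tau(j)}}\psi(\beta)=t_{\tau(j)},
\]
where the penultimate equality uses $\eta(X_{\sigma(j)})=Y_{\tau(j)}$. I expect the main obstacle to be the first step: carefully combining the symmetry of $\CX$ and $\CY$ with the reduct-isomorphism of Lemma \ref{combi} to build an $\eta$ that intertwines the \emph{right} pair of indexings $\sigma(\cdot)$ versus $\tau(\cdot)$. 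The Galois-theoretic extension of $\pi$ is then a standard use of the $\mathfrak S_N$ calculation already in hand, and the final computation is automatic.
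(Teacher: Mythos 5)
Your proof is correct and follows essentially the same route as the paper: the paper likewise builds a bijection $\lambda\colon X\to Y$ carrying each $X_{\sigma(i)}$ to $Y_{\tau(i)}$ (your $\eta$), extends the induced permutation $\phi(\alpha)\mapsto\psi(\lambda(\alpha))$ of $\Theta(a)$ to an automorphism of $\CM$ over $\C\cup\{a\}$ using $\mathrm{Gal}\bigl(Z^N+Z^{N-1}+a\,/\,\C(a)\bigr)=\mathfrak S_N$, and concludes by the same computation. Your explicit construction of $\eta$ from the symmetry of $\CX$, $\CY$ and the $\CL_{n-1}$-reduct isomorphism just fills in a step the paper leaves to the reader.
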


\begin{proof}
By the choice  of $\CX$ and $\CY$, there is a bijection $\lambda$ between
 $X$ and $Y$ that sends each set $X_{\sigma(i)}$ to the corresponding set $Y_{\tau(i)}$ for $i=1,\ldots,n-1$.

But as noted in Lemma \ref{elements}, the Galois group of $Z^N+Z^{N-1}+a$ over $\C(a)$ is $\mathfrak S _N$.
Therefore the bijection $\phi (\alpha)\mapsto \psi (\lambda(\alpha))$ of $\Theta(a)$ extends to a $\CM$-automorphism $\Lambda$ of $M$ fixing  $\C \cup \{a \}$.
We now have
\[
t_{\tau(i)}=\sum_{\beta \in Y_{\tau(i)}} \psi(\beta)=\sum_{\alpha \in X_{\sigma(i)}} \psi(\lambda(\alpha))=
\Lambda\big( \sum_{\alpha \in X_{\sigma(i)}} \phi(\alpha)\big) =\Lambda(s_{\sigma(i)}):
\]

$\Lambda$ sends $(s_{\sigma(1)},\ldots,s_{\sigma(n-1)})$ to $(t_{\tau(1)},\ldots,t_{\tau(n-1)})$; in particular these two tuples are elementary equivalent 
over $\C\cup\{a\}$ modulo the theory of $\overline \C$.

\end{proof}

\begin{lem}
\label{nonempty}
Let $a\in M\setminus \C$, the sets 
\[
\{(u_1\ldots,u_n)\in M^n | \CM \models S'(u_1,\ldots,u_n,a)\wedge \neg T'(u_1,\ldots,u_n,a)\}
\]
and
\[
\{(u_1\ldots,u_n)\in M^n | \CM \models T'(u_1,\ldots,u_n,a)\}
\]
are non-empty.
\end{lem}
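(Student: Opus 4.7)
The plan is to handle the two nonemptiness claims separately.

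The second set is nonempty essentially by definition of $T'$: since $a\in M\setminus\C$, the polynomial $Z^N+Z^{N-1}+a$ has $N$ distinct roots, so $|\Theta(a)|=N=|Y|$ and there exists a bijection $\psi\colon Y\to\Theta(a)$. Setting $t_i=\sum_{\beta\in Y_i}\psi(\beta)$ produces a tuple $(t_1,\dotsc,t_n)$ witnessing $T'(t_1,\dotsc,t_n,a)$. The same recipe with any bijection $\phi\colon X\to\Theta(a)$ and $s_i=\sum_{\alpha\in X_i}\phi(\alpha)$ exhibits a tuple realizing $S'(\cdot,a)$; the real work is showing that such an $S'$-tuple can be made to fail $T'$.

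I would fix such $\phi$ and $s_i$ as above and assume for contradiction that the tuple $(s_1,\dotsc,s_n)$ also satisfies $T'(\cdot,a)$, witnessed by some bijection $\psi\colon Y\to\Theta(a)$ with $s_i=\sum_{\beta\in Y_i}\psi(\beta)$ for every $i$. The crux is to transport the equalities $\sum_{\alpha\in X_i}\phi(\alpha)=\sum_{\beta\in Y_i}\psi(\beta)$ back to set-theoretic identities inside $\Theta(a)$, so that we can convert the equality of \emph{sums} into a genuine isomorphism of $\CL_n$-structures.

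The enabling observation is a cardinality count: splitting subsets of $\{1,\dotsc,n\}$ containing $i$ according to the parity of the remaining $n-1$ elements gives $|X_i|=|Y_i|=2^{n-2}$. Hence $\phi(X_i)$ and $\psi(Y_i)$ are subsets of $\Theta(a)$ of the same size with equal sum, and Lemma \ref{elements} forces $\phi(X_i)=\psi(Y_i)$ for each $i$. Then $\lambda:=\psi^{-1}\circ\phi\colon X\to Y$ is a bijection with $\lambda(X_i)=Y_i$ for all $i\leq n$, i.e., an $\CL_n$-isomorphism $\CX\to\CY$, contradicting the non-isomorphism clause of Lemma \ref{combi}. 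The main obstacle is precisely the cardinality check $|X_i|=|Y_i|$: without matching sizes, Lemma \ref{elements} is silent and the reduction to the combinatorics of $\CX,\CY$ breaks down; everything else is bookkeeping.
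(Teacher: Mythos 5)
Your proof is correct and follows essentially the same route as the paper's: exhibit witnesses by definition, assume an $S'$-tuple also satisfies $T'$, and use Lemma \ref{elements} to upgrade the equality of sums to $\phi(X_i)=\psi(Y_i)$, so that $\psi^{-1}\circ\phi$ contradicts the non-isomorphism of $\CX$ and $\CY$. Your explicit check that $|X_i|=|Y_i|=2^{n-2}$ (needed since Lemma \ref{elements} only compares subsets of equal size) is a detail the paper leaves implicit, and is a welcome addition.
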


\begin{proof}

Fix $a\in M\setminus \C$.

By definition of $T'$, it is clear that there is some $(u_1\ldots,u_n)\in M^n$ for which $T'(u_1,\ldots,u_n,a)$ holds. 

Similarly, we can find some $(s_1,\ldots,s_n)\in M^n$ 
such that $S'(s_1,\ldots,s_n,a)$ holds. Suppose $T'(s_1,\ldots,s_n,a)$ also holds.
Then we get some bijections $\phi:X\to \Theta(a)$ and $\psi:Y\to \Theta(a)$
\[
 \mbox{for all $1\leq i \leq n$, } \sum_{\alpha \in X_i} \phi(\alpha)=\sum_{\beta \in Y_i} \psi(\beta).
\]

By Lemma \ref{elements} we thus get that $\phi(X_i)=\psi(Y_i)$ for all $1\leq i \leq n$: $\psi^{-1}\circ \phi$ would be an isomorphism between 
$\CX$ and $\CY$. This is can not be, hence $T'(s_1,\ldots,s_n,a)$ must fail.
\end{proof}

It now remains to replace the formul\ae \ $S'$ and $T'$ by formul\ae \ definable in $\mathcal B(\overline \C_{\emptyset})$:

\begin{lem}
\label{binarization}
There are relations $S(s_1,\ldots,s_n,u)$ and $T(t_1,\ldots,t_n,u)$
definable in $\mathcal B(\overline \C_{\emptyset})$ such that for all 
$a\in M\setminus \C$ we have that
\begin{enumerate}
\item $S(s_1,\ldots,s_n,a)$ holds if and only  $S'(s_1,\ldots,s_n,a)$ holds and
\item $T(s_1,\ldots,s_n,a)$ holds if and only  $T'(s_1,\ldots,s_n,a)$ holds
\end{enumerate}

\end{lem}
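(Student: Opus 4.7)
The plan is to reduce $S'$ and $T'$, which involve existential quantification over bijections and sums of $|X_i|$ (resp. $|Y_i|$) many roots, to first-order formulas built solely from binary $\emptyset$-definable relations of $\overline{\C}$. The key ingredient is Lemma \ref{elements}: for $a \in M \setminus \C$ and each $k \leq N$, the map $A \mapsto \sum_{z \in A} z$ on $k$-subsets of $\Theta(a)$ is injective. Hence once $s_i$ is known to equal such a subset sum of size $|X_i|$, the subset itself is uniquely determined, and membership in it can be encoded by a binary predicate in the pair $(y,s_i)$.

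First I would introduce the needed binary predicates. Let $P(y,a)$ be the binary $\emptyset$-definable relation $y^N + y^{N-1} + a = 0$, expressing $y \in \Theta(a)$. For each $k \in \{1, \dotsc, N\}$ set
\[
\mathrm{In}_k(y, s) \,:\Longleftrightarrow\, \exists z_2, \dotsc, z_k \Bigl( \bigwedge_{j=2}^k z_j^N + z_j^{N-1} = y^N + y^{N-1} \,\wedge\, \bigwedge_{j=2}^k z_j \neq y \,\wedge\, \bigwedge_{2 \leq j < j' \leq k} z_j \neq z_{j'} \,\wedge\, s = y + z_2 + \dotsb + z_k \Bigr).
\]
This is $\emptyset$-definable and binary in $(y,s)$. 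For $a \in M \setminus \C$ and $y \in \Theta(a)$, $\mathrm{In}_k(y,s)$ asserts that $y$ belongs to the (by Lemma \ref{elements} unique) $k$-subset of $\Theta(a)$ summing to $s$, when such a subset exists.

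Next, identify $X$ with $\{1, \dotsc, N\}$ via a fixed enumeration (so each $X_i$ becomes a concrete subset of $\{1, \dotsc, N\}$), and define
\[
S(s_1, \dotsc, s_n, a) \,:\Longleftrightarrow\, \exists y_1, \dotsc, y_N \biggl[ \bigwedge_{j=1}^N P(y_j, a) \,\wedge\, \bigwedge_{j \neq j'} y_j \neq y_{j'} \,\wedge\, \bigwedge_{i=1}^n \Bigl( \bigwedge_{j \in X_i} \mathrm{In}_{|X_i|}(y_j, s_i) \,\wedge\, \bigwedge_{j \notin X_i} \neg\, \mathrm{In}_{|X_i|}(y_j, s_i) \Bigr) \biggr].
\]
Every atomic subformula is a binary $\emptyset$-definable relation of $\overline{\C}$, so $S$ is definable in $\mathcal B(\overline{\C}_\emptyset)$; the formula $T$ is obtained identically with $Y$ and the $Y_i$ replacing $X$ and the $X_i$.

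It remains to verify $S(\bar s, a) \Leftrightarrow S'(\bar s, a)$ for $a \in M \setminus \C$. If $\phi$ witnesses $S'$, then $y_j := \phi(j)$ satisfies the body of $S$, by Lemma \ref{elements}. Conversely, from witnesses $y_1, \dotsc, y_N$ of $S$, the distinctness and the predicates $P(y_j,a)$ ensure that $(y_j)$ is a permutation of $\Theta(a)$; the conjunction $\bigwedge_{j \in X_i} \mathrm{In}_{|X_i|}(y_j, s_i)$ combined with the uniqueness from Lemma \ref{elements} forces $s_i = \sum_{j \in X_i} y_j$ for every $i$, so setting $\phi(j) := y_j$ yields a bijection witnessing $S'$. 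The only delicate point is setting up $\mathrm{In}_k$ as a binary predicate that captures the correct ``membership in the unique $k$-subset'' condition; once this observation is in place, the rest is direct bookkeeping using Lemma \ref{elements}, and I foresee no further obstacle.
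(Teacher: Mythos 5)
Your proof is correct and follows essentially the same route as the paper's: your binary predicate $\mathrm{In}_k(y,s)$ is exactly the paper's relation $R(s,y)$ (the paper uses the single size $k=|X_1|$, which equals every $|X_i|$ by symmetry of $\mathcal{X}$), and both converse directions rest on the same appeal to the uniqueness statement of Lemma \ref{elements} to force $s_i=\sum_{j\in X_i}y_j$. The extra negative conjuncts $\bigwedge_{j\notin X_i}\neg\,\mathrm{In}_{|X_i|}(y_j,s_i)$ you add are harmless (they do hold for the witnesses coming from $S'$, again by Lemma \ref{elements}) but are not needed.
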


\begin{proof}
   Consider $R(u,v)$ the binary relation that says that $u$ is the sum
  of $N'=|X_1|$ distinct roots of the polynomial
  $Z^N+Z^{N-1}-(v^N+v^{N-1})$, one of which is $v$. That is $R(u,v)$
  holds if and only if
\begin{multline*}
  \exists (z_1, \ldots ,z_{N'-1},z) \ \big( u=v+ \sum_{k=1} ^{N'-1} z_k \wedge v\in \Theta (z) 
\wedge \bigwedge_{k=1} ^{N'-1} z_k\in \Theta(z)  \\
  \wedge \bigwedge_{k=1} ^{N'-1} v\neq z_k \wedge \bigwedge_{k\neq l}
  z_k \neq z_l \big).
\end{multline*}
The relation $R$ is definable in $\overline \C$ without parameters.

Let $\mathbf{x}$ be a $N$-tuple of variables $(x_{\alpha})_{\alpha\in X}$ indexed by $X$ and consider the $(n+1)$-ary relation $S$ defined by 
\begin{multline*}
S(s_1,\ldots,s_n,u) \Leftrightarrow \\
\big( \exists \mathbf{x} 
\bigwedge_{\alpha\in X} x_{\alpha}\in\Theta(u) 
\wedge \bigwedge_{\substack{\alpha ,\alpha ' \in X \\ \alpha\neq \alpha '}} x_{\alpha}\neq x_{\alpha '} 
\wedge \bigwedge _{\alpha \in X_i} R(s_i, x_{\alpha})
\big).
\end{multline*}

Let $\mathbf{y}$ be a $N$-tuple of variables $(y_{\beta})_{\beta\in Y}$ indexed by $Y$ and consider the $(n+1)$-ary relation $T$ defined by 
\begin{multline*}
T(t_1,\ldots,t_n,u) \Leftrightarrow \\
\big( \exists \mathbf{y} 
\bigwedge_{\beta\in Y} y_{\beta}\in\Theta(u) 
\wedge \bigwedge_{\substack{\beta ,\beta ' \in Y^2 \\ \beta\neq \beta '}} y_{\beta}\neq y_{\beta '} 
\wedge \bigwedge _{\beta \in Y_j} R(t_j, y_{\beta})
\big).
\end{multline*}

These relations are definable in $\mathcal B(\overline \C_{\emptyset})$ (the relations $R$ and ``$z\in \Theta (u)$'' being binary). 
We will show that they fulfill the conditions (1) and (2) of the Lemma. 

Fix  $a\in M\setminus \C$.

It is clear that if $S'(s_1,\ldots,s_n,a)$ holds then  $S(s_1,\ldots,s_n,a)$ holds. Reciprocally, let $(s_1,\dots,s_n)\in M^n$ be such that
$S(s_1,\ldots,s_n,a)$ holds.  By definition of $S$, we can find a
bijection $\phi:X\to \Theta(a)$ and, for each $i$ and each $\alpha\ni
i$, an injection $\phi _{i,\alpha}:X_i \to \Theta(a)$, such that for
each $1\leq i\leq n$,
\[
s_i=\sum _{\alpha '\in X_i} \phi_{i,\alpha} (\alpha ')\mbox{ and } \phi_{i,\alpha}(\alpha)=\phi(\alpha).
\]

Fix such an $i$. Consider $\alpha$ and $\alpha'$ in $X_i$. By Lemma
\ref{elements}, $\phi_{i,\alpha}$ and $\phi_{i,\alpha'}$ have the same
range and therefore $\phi(\alpha')$ belongs to the range of
$\phi_{i,\alpha}$ for all $\alpha'\in X_i$. We thus have that
$\phi(X_i)=\phi_{i,\alpha}(X_i)$ for some (all) $\alpha \in X_i$
and \[s_i=\sum_{\alpha \in X_i} \phi(\alpha).	\]

The proof of (2) is similar. 
\end{proof}

Putting Lemmata \ref{binarization}, \ref{elementaryequivalent} and \ref{nonempty} together, we get, as announced in Claim \ref{AB}, two relations $S$ and $T$ 
definable in  $\mathcal B(\overline \C_{\emptyset})$ that satisfy the conditions ({\bf A}) and ({\bf B}).

\section{Definability}
\label{quantifier}

Since example \ref{exemple} and Proposition \ref{nobound} show that for
any fixed $n\geq 2$, there are sets definable in $\mathcal C (\overline \C
_{\C})$ (the structure on $\C$ whose basic relations are all the algebraic curves, of any arity) which are not \emph{quantifier-free} definable in $\mathcal
C_n (\overline \C _{\C})$ (the structure on $\C$ whose basic relations are all the algebraic curves of $\C ^n$), it is natural to ask if all the sets
definable in $\mathcal C (\overline \C _{\C})$ are definable in some $\mathcal C_n (\overline \C _{\C})$ (allowing, this time, quantifiers).

\begin{prop}
The two structures $\mathcal C_3 (\overline \C _{\C})$  and 
$\mathcal C (\overline \C _{\C})$ define the same sets.
\end{prop}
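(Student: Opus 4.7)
The plan is to invoke Theorem \ref{main} to reduce the problem, and then to define each $\C$-definable algebraic curve $C\subseteq \C^n$ with $n\ge 4$ by an explicit existential formula over a family of 3-curves produced from two linear forms. Since $\mathcal C_3(\overline\C_\C)$ is obviously a reduct (in the sense of definability) of $\mathcal C(\overline\C_\C)$, only one inclusion needs to be established.

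By Theorem \ref{main} applied with $\C$ as the set of parameters (noting that $\acl(\C)=\C$ because $\overline\C$ is algebraically closed), every set definable in $\mathcal C(\overline\C_\C)$ is a boolean combination of $\C$-definable curve-based cylinders. Since boolean combinations and the addition of dummy variables are available in $\mathcal C_3(\overline\C_\C)$, the proposition reduces to showing that every $\C$-definable algebraic curve $C\subseteq\C^n$ is definable (with quantifiers) in $\mathcal C_3(\overline\C_\C)$. The cases $n\leq 3$ are immediate, and the $0$-dimensional case reduces to finite unions of $\C$-rational points, each cut out by conjunctions of singleton $1$-curves. Only irreducible $C$ of dimension $1$ in $\C^n$ with $n\ge 4$ needs attention.

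For such a $C$, since $\C(C)$ has transcendence degree $1$ over $\C$, a standard primitive-element argument produces two $\C$-linear forms $L_1,L_2$ on $\C^n$ such that $\phi:=(L_1,L_2)\colon C\to\C^2$ is birational onto its image $C':=\phi(C)$. For each $i\in\{1,\dots,n\}$ I then set
\[
E_i \ :=\ \{\,(L_1(p),L_2(p),p_i)\ :\ p\in C\,\}\ \subseteq\ \C^3,
\]
a $\C$-definable constructible set of dimension at most $1$, hence a basic relation of $\mathcal C_3(\overline\C_\C)$. I consider the formula
\[
\varphi(x_1,\dots,x_n)\ :=\ \exists y_1\,\exists y_2\ \bigwedge_{i=1}^{n} E_i(y_1,y_2,x_i),
\]
whose satisfaction set $\widetilde C$ I claim coincides with $C$ up to a finite, $\C$-rational exceptional set. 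The inclusion $C\subseteq\widetilde C$ is witnessed by $y_j=L_j(x)$; for the converse, birationality of $\phi$ yields a finite subset $F_\phi\subseteq C'$ over which $\phi$ fails to be injective, and whenever $(y_1,y_2)\in C'\setminus F_\phi$ the unique preimage $p$ forces $x_i=p_i$ for all $i$, hence $x=p\in C$. When $(y_1,y_2)\in F_\phi$ the finitely many mix-and-match combinations of coordinates of the (finitely many) preimages produce a finite exceptional set $F$ of spurious candidates.

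Since $\overline\C$ is algebraically closed, each point of $F$ has coordinates in $\C$, so $F$ is definable in $\mathcal C_3(\overline\C_\C)$ as a finite disjunction of conjunctions of singletons; therefore $C=\widetilde C\setminus F$ is definable in $\mathcal C_3(\overline\C_\C)$, as required. The main obstacle I anticipate is carrying out cleanly the bookkeeping for the exceptional set $F$ and making rigorous the standard algebraic-geometric fact producing the birational pair $(L_1,L_2)$; neither is deep, but together they are where the care is required to turn the construction above into a formal proof.
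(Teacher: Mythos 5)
Your proposal is correct and follows essentially the same route as the paper: reduce via quantifier elimination for $\mathcal C(\overline \C_\C)$ to showing that each algebraic curve in $\C^n$ is definable in $\mathcal C_3(\overline \C_\C)$, then use birationality of a space curve with a plane curve to recover the curve from ternary relations (the coordinate graphs over the plane model) by quantifying out the two plane coordinates. The only (immaterial) difference is bookkeeping at the exceptional locus: the paper adds finitely many missing points to the projection of the graph, whereas you subtract finitely many spurious mix-and-match points; both finite sets are handled by singleton curves.
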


\begin{proof}
 By quantifier-elimination for $\mathcal C (\overline \C _{\C})$, it suffices to show that any algebraic curve is definable in $\mathcal C_3 (\overline \C _{\C})$.
But it is well known that any affine curve $Y\subset \C^n$ is bi-rational to a planar curve $X\subset \C^2$ 
(see for example Chapter I, Section 3.3, Theorem 5 of \cite{Sh}).
Let $\phi=(\phi_1,\cdots,\phi_n)$ be such an isomorphism. Each restriction of $\phi_i$ to $X$ is a basic definable set in $\mathcal C_3 (\overline \C _{\C})$ 
thus the graph $\Gamma$ of the restriction of $\phi$ to $X$ is (quantifier-freely) definable in $\mathcal C_3 (\overline \C _{\C})$ and $Y$, which 
is the union of the projection of $\Gamma$ on the last $n$ coordinates and finitely many points, is definable in $\mathcal C_3 (\overline \C _{\C})$.
\end{proof}

\begin{rem}
 From the proof, we see that the depth of alternation of quantifier for fomul\ae\ in the language with a symbol for each algebraic curve of $\C ^3$ is at most $1$.
The lack of quantifier-elimination implies that this maximal depth is realized. 
\end{rem}

\begin{question}
 Is $\mathcal C_2 (\overline \C _{\C})$ a proper reduct (in the sense of definability) of $\mathcal C (\overline \C _{\C})$  ? 
\end{question}

\end{document}